\numberwithin{equation}{section}
\newtheorem{theorem}{Theorem}[section]
\newtheorem{corollary}[theorem]{Corollary}
\newtheorem{defn}[theorem]{Definition}
\newtheorem{lem}[theorem]{Lemma}
\newtheorem{proposition}[theorem]{Proposition}
\title[Averaging theorem for nonlinear Str\"odinger equations]{An averaging theorem for  nonlinear Schr\"odinger  equations with small nonlinearities}
\author{HUANG Guan\\C.M.L.S, Ecole polytechnique}
\begin{document}
\maketitle
\begin{abstract}
     Consider   nonlinear  Schr\"odinger equations with small nonlinearities
     \[\frac{d}{dt}u+i(-\triangle u+V(x)u)=\epsilon \mathcal{P}(\triangle u,u,x),\quad x\in \mathbb{T}^d.\eqno{(*)}\]
     Let $\{\zeta_1(x),\zeta_2(x),\dots\}$ be the $L_2$-basis formed by  eigenfunctions of the operator $-\triangle +V(x)$. For any complex function $u(x)$,  write it as \mbox{$u(x)=\sum_{k\geqslant1}v_k\zeta_k(x)$} and set $I_k(u)=\frac{1}{2}|v_k|^2$. Then for any solution $u(t,x)$ of the linear equation $(*)_{\epsilon=0}$ we have $I(u(t,\cdot))=const$. In this work it is  proved that if $(*)$  is well posed on time-intervals $t\lesssim \epsilon^{-1}$ and satisfies there some mild a-priori assumptions, then  for any its solution  $u^{\epsilon}(t,x)$,  the limiting behavior of the curve  $I(u^{\epsilon}(t,\cdot))$ on  time intervals of order $\epsilon^{-1}$, as $\epsilon\to0$, can be uniquely characterized by solutions of a certain well-posed   effective equation.  
\end{abstract}
\bibliographystyle{plain}
\setcounter{section}{-1}
\section{Introduction}
We consider the Schr\"odinger equation
\begin{equation}
\frac{d}{dt}u+i(-\triangle u+V(x)u)=0,\quad x\in\mathbb{T}^d,\label{ls1}
\end{equation}
and its nonlinear perturbation:
\begin{equation}
\frac{d}{dt}u+i(-\triangle u+V(x)u)=\epsilon \mathcal{P}(\triangle u, \nabla u, u,  x),\quad x\in \mathbb{T}^d,\label{pls1}
\end{equation}
where $\mathcal{P}: \mathbb{C}^{d+2}\times \mathbb{T}^d\to \mathbb{C}$ is a  smooth function, $1\leqslant V(x)\in C^n(\mathbb{T}^d)$ is a  potential (we will assume that $n$ is sufficiently large)  and $\epsilon\in (0,1]$ is the perturbation parameter.   For any $p\in \mathbb{R}$ denote by $H^p$ the Sobolev space of complex-valued periodic functions, provided with the  norm $||\cdot||_p$,
\[||u||_p^2=\Big\langle (-\triangle)^p u,\; u\Big\rangle+\langle u,\;u\rangle, \quad\text{if}\quad p\in\mathbb{N},\]
where $\langle \cdot,\cdot\rangle$ is the real scalar product in $L^2(\mathbb{T}^d)$,
\[\langle u,\;v\rangle=Re \int_{\mathbb{T}^d}u\bar{v}dx,\quad u,\;v\in L^2(\mathbb{T}^d).\]

If $p>\frac{d}{2}+2=p_d$, then the mapping $H^p\to H^{p-2}$, $u(x)\mapsto \mathcal{P}(\triangle u,\nabla u, u,x)$ is smooth (see below Lemma \ref{lem-smooth}). For any $T>0$, a curve $u\in C([0,T], H^p)$, $p>p_d$, is called a solution of (\ref{pls1}) in $H^p$ if it is a mild solution of this equation. That is, if the relation obtained by integrating (\ref{pls1}) in $t$ from $0$ to $s$ holds for any $0\leqslant s\leqslant T$.  
We wish to study long-time behaviours of solutions for (\ref{pls1}) and assume:
\smallskip

\noindent{\bf Assumption A} {\it (a-priori estimate). Fix some $T>0$. For any $p> p_d+2$, there exists $n_1(p)>0$ such that  if $n\geqslant n_1(p)$, then  for any $0<\epsilon\leqslant1$,  the perturbed  equation (\ref{pls1}), provided with initial data
\begin{equation}
u(0)=u_0\in H^p,\label{ind}
\end{equation}
has a unique solution $u(t,x)\in H^p$ such that
\[||u||_p\leqslant C(T,p,||u_0||_p),\quad \text{for}\quad t\in[0,T\epsilon^{-1}].\]}
Here  and below the constant $C$ also  depends  on the potential $V(x)$.

Denote the operator \[A_Vu:=-\triangle u+V(x)u.\]
   Let $\{\zeta_k\}_{k\geqslant 1}$ and $\{\lambda_k\}_{k\geqslant 1}$ be its real eigenfunctions and eigenvalues, ordered in such a way that
   \[1\leqslant \lambda_1\leqslant\lambda_2\leqslant\cdots.\]
    We say that a potential $V(x)$ is {\it non-resonant}  if 
    \begin{equation}\sum_{k=1}^{\infty}\lambda_k s_k \neq 0,\label{non-r}
    \end{equation}
     for every finite non-zero integer vector $(s_1,s_2,\cdots)$. 
For any complex-valued function $u(x)\in H^p$, we denote by \begin{equation}\Psi(u):=v=(v_1,v_2,\cdots),\quad v_j\in\mathbb{C},
\label{Psi}
\end{equation}
the  vector of its Fourier coefficients with respect to the basis $\{\zeta_k\}$, i.e. \mbox{$u(x)= \sum_{k=1}^{\infty} v_k\zeta_k$}. In the space of complex sequences $v$, we introduce the norms
\[|v|_p^2=\sum_{k\geqslant 1}|v_k|^2\lambda_k^p,\quad p\in \mathbb{R},
\]
and define $h^p:=\{v: |v|_p<+\infty\}$.
Denote 
\begin{equation}
I_k=\frac{1}{2}|v_k|^2,\quad \varphi_k=\text{Arg} \;v_k,\quad k\geqslant 1.
\end{equation}
Then $(I,\varphi)\in \mathbb{R}^{\infty}\times\mathbb{T}^{\infty}$ are the action-angles for the linear equation  (\ref{ls1}). That is, in these variables equation (\ref{ls1}) takes the integrable form
\begin{equation}
\frac{d}{dt}I_k=0,\quad \frac{d}{dt}\varphi_k=\lambda_k,\quad k\geqslant 1.
\end{equation}
Abusing notation we will write $v=(I,\varphi)$. 
Define $h_I^p$ to be the weighted $l^1$-space
\[h^p_I:=\Big\{I=(I_1,\dots)\in\mathbb{R}^{\infty}:|I|_p^{\sim}<+\infty\Big\},\quad |I|_p^{\sim}=2\sum_{i=1}^{\infty}\lambda_i^p|I_i|,\]
and consider the mapping 
\[\pi_I: h^p\to h_I^p,\;v\mapsto I,\quad I_j(v)=\frac{1}{2}|v_j|^2,\quad j\geqslant 1.\]
It    is continuous and its image  is the positive octant~\mbox{$h^p_{I+}=\{I\in h^p_I: I_j\geqslant0,\forall j\}$.}

We  mainly concern with the long time behavior of the actions $I(u(t))\in \mathbb{R}^{\infty}_+$ of solutions for the perturbed equation (\ref{pls1}) for $t\lesssim\epsilon^{-1}$.  For this purpose, it is convenient to pass to the slow time $\tau=\epsilon t$ and  write equation (\ref{pls1})  in the action-angle coordinates $(I,\varphi)$:
\begin{equation}
\dot{I}_k=F_k(I,\varphi),\quad \dot{\varphi}_k=\epsilon^{-1}\lambda_k+G_k(I,\varphi),\quad k\geqslant 1,\label{ipls}
\end{equation}
where $I\in \mathbb{R}^{\infty}$, $\varphi\in\mathbb{T}^{\infty}$ and $\mathbb{T}^{\infty}:=\{(\theta_i)_{i\in\mathbb{N}}: \theta_i\in\mathbb{T}\}$ is the infinite-dimensional torus endowed with the Tikhonov toppology. The functions $F_k$ and $G_k$, $k\geqslant1$  represent  the perturbation term $\mathcal{P}$, written in the action-angle coordinates. 
In the finite dimensional situation, the \emph{averaging principle}   is well established for perturbed integrable systems. The principle states that  for    equations 
\[
\frac{d}{dt}I=\epsilon f(I,\varphi),\quad \frac{d}{dt}\varphi=W(I)+\epsilon g(I,\varphi),
\]
where  $I\in \mathbb{R}^M$ and  $\varphi\in\mathbb{T}^m$,  on  time intervals of order $\epsilon^{-1}$ the action components $I(t)$ can be well approximated by solutions of the following averaged equation:
\begin{equation}
\frac{d}{dt}J=\epsilon \langle f\rangle(J),\quad \langle f\rangle (J)=\int_{\mathbb{T}^m}f(J,\varphi)d\varphi.
\label{pifav}
\end{equation}
This assertion has been justified under various non-degeneracy assumptions on the frequency vector $W$ and  the initial data $(I(0),\varphi(0))$ (see~\cite{LM88}).  In this paper we want to prove a version of the  averaging principle for the perturbed  Schr\"odinger equation (\ref{pls1}).  We define a corresponding averaged equation for (\ref{ipls})   as in  (\ref{pifav}):
\begin{equation}
\dot{J_k}=\langle F_k\rangle(J),\quad \langle F_k\rangle(J)=\int_{\mathbb{T}^{\infty}}F_k(J,\varphi)d\varphi,\quad k\geqslant 1,\label{aipls}
\end{equation}
where $d\varphi$ is the Haar measure on $\mathbb{T}^{\infty}$.  But now, in difference with the finite-dimensional case,  the well-posedness of equation (\ref{aipls}) is not obvious, since the map $\langle F\rangle(I)=(\langle F_1\rangle(I),\dots)$ is unbounded and the functions $\langle F_k\rangle(I)$, $k\geqslant 1$, may be not  Lipschitz  with respect to $I$ in $h_{I+}^p$. 
In~\cite{Kuk10}, S. Kuksin observed  that the averaged equation (\ref{aipls}) may be lifted to a  regular `effective equation' on the variable $v\in h^p$, which transforms to (\ref{aipls}) under the projection $\pi_I$. 
 To derive an  effective equation, corresponding to our problem,  we first use mapping $\Psi$ to write (\ref{pls1}) as a system of equation on the vector $v(\tau)$:
\begin{equation}
\dot{v}=\epsilon^{-1}d\Psi(u)(-iA_V(u))+P(v).\label{vpls}
\end{equation}
Here $P(v)$ is the perturbation term $\mathcal{P}$, written in $v$-variables. This equation is singular when $\epsilon\to 0$. The effective equation for (\ref{vpls}) is a certain  regular equation
\begin{equation}
\dot{v}=R(v).
\label{evpls}
\end{equation}
To define the effective vector filed $R(v)$, for any $\theta=(\theta_1,\theta_2,\cdots)\in \mathbb{T}^{\infty}$ let us denote by $\Phi_{\theta}$ the linear operator in the space of complex sequences $(v_1,v_2,\cdots)\in h^p$ which multiplies each  component $v_j$ with $e^{i\theta_j}$.  Rotation $\Phi_{\theta}$ acts on vector fields on the $v$-space, and $R(v)$ is the result of action of $\Phi_{\theta}$ on $P(v)$, averaged in $\theta$:
\[R(v)=\int_{\mathbb{T}^{\infty}}\Phi_{-\theta}P(\Phi_{\theta}v)d\theta.\]
The map $R(v)$ is smooth with respect to $v$ in $h^p$.  Again, we understand solutions for equation (\ref{evpls}) in the mild sense.

We now make the second assumption:
\smallskip

\noindent{\bf Assumption B} {\it (local well-posedness of the effective equation). For any $p > p_d+2$, there exists $n_2(p)>0$ such that if $n\geqslant n_2(p)$, then for  any  initial data $v_0\in h^p$, there exists $T(|v_0|_p)>0$  such that the effective equations (\ref{evpls}) has a unique solution $v\in C([0,T(|v_0|_p)],h^p)$.  Here $T: \mathbb{R}_+\to \mathbb{R}_{>0}$ is an upper  semi-continuous function.}
\smallskip

The main result of this paper is the following statement, where $v^{\epsilon}(\tau)$ is the Fourier transform of a solution $u^{\epsilon}(t,x)$ for the problem (\ref{pls1}), (\ref{ind}) (existing by Assumption A), written in the slow time $\tau=\epsilon t$:
\[v^{\epsilon}(\tau)=\Psi\big(u^{\epsilon}(\epsilon^{-1}\tau)\big), \quad \tau\in[0,T].
\]
We also assume Assumption B. 

\begin{theorem} For any $p>p_d+2$, if $n\geqslant\max\{p,n_1(p),n_2(p)\}$, then there exists $I^0(\cdot)\in C([0,T],h^p_I)$ such that for every $q<p$,
\[I(v^{\epsilon}(\cdot))\underset{\epsilon\to0}\longrightarrow I^0(\cdot)\quad \mbox{in}\quad C([0,T],h^q_I).\]
 Moreover $I^0(\tau )$,  $\tau\in[0,T]$,  solves the averaged equation (\ref{aipls}) with initial data $I^0(0)=I(\Psi(u_0))$, and it may be written as $I^0(\tau)=I(v(\tau))$, where $v(\cdot)$ is the unique solution of the effective equation (\ref{evpls}),   equal to  $\Psi(u_0)$ at $\tau=0$.
\end{theorem}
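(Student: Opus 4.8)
The plan is to run the effective-equation scheme of \cite{Kuk10}: pass to the interaction representation to remove the singular term in (\ref{vpls}), use Assumption A to obtain compactness, identify the limit through an infinite-dimensional averaging lemma that rests on the non-resonance of $V$, and upgrade to convergence of the whole family by the uniqueness in Assumption B.

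First I would introduce the interaction representation. Writing $\Lambda=(\lambda_1,\lambda_2,\dots)$ and, for the solution $v^\epsilon$ of (\ref{vpls}), setting $a^\epsilon(\tau)=\Phi_{\tau\Lambda/\epsilon}v^\epsilon(\tau)$, one checks that the singular term in (\ref{vpls}) cancels exactly and that $a^\epsilon$ satisfies the regular but fast-oscillating equation $\dot a^\epsilon=\mathcal P^{\tau/\epsilon}(a^\epsilon)$, where $\mathcal P^t(w):=\Phi_{t\Lambda}P(\Phi_{-t\Lambda}w)$. Since each $\Phi_\theta$ is a diagonal unitary operator on every $h^p$, we have $|a^\epsilon(\tau)|_p=|v^\epsilon(\tau)|_p$ and, crucially, $I(a^\epsilon(\tau))=I(v^\epsilon(\tau))$, so it suffices to study $a^\epsilon$. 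By Assumption A, $\sup_{[0,T]}|a^\epsilon|_p\le C$ uniformly in $\epsilon$; since $P$ sends bounded subsets of $h^p$ into bounded subsets of $h^{p-2}$ (Lemma \ref{lem-smooth}), also $\sup_{[0,T]}|\dot a^\epsilon|_{p-2}\le C'$. Hence $\{a^\epsilon\}$ is bounded in $C([0,T],h^p)$ and uniformly Lipschitz into $h^{p-2}$; interpolating, it is relatively compact in $C([0,T],h^q)$ for every $q<p$, so along a sequence $\epsilon_j\to0$ we get $a^{\epsilon_j}\to a^0$ in all these spaces, with $|a^0(\tau)|_p\le C$ and $a^0(0)=\Psi(u_0)$.

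The core step is to show that along this sequence
\[\int_0^\tau\mathcal P^{s/\epsilon_j}\big(a^{\epsilon_j}(s)\big)\,ds\ \longrightarrow\ \int_0^\tau R\big(a^0(s)\big)\,ds\qquad(q<p),\]
the convergence being in $h^{q-2}$ uniformly in $\tau\in[0,T]$. I would prove this by the usual freezing argument: chop $[0,\tau]$ into short subintervals, on each of which $a^{\epsilon_j}(s)$ is replaced by a constant, the error being controlled by the uniform modulus of continuity of $a^{\epsilon_j}$, by $\epsilon_j\to0$, and by continuity of $P$ together with the uniform $h^q$-convergence. For frozen data $w$ one then needs $\frac1L\int_0^L\mathcal P^t(w)\,dt\to R(w)$ as $L\to\infty$; approximating $P$ on bounded subsets of $h^p$ by a field $P^{(N)}$ depending on $v_1,\dots,v_N$ only — the truncation error being small in $h^{p-2}$ by Lemma \ref{lem-smooth} and the weights $\lambda_k^{p}$ — reduces this to a finite-dimensional averaging of $\theta\mapsto\Phi_\theta P^{(N)}(\Phi_{-\theta}w)$ along the linear flow $t\mapsto t(\lambda_1,\dots,\lambda_N)$ on $\mathbb T^N$, which equidistributes by Weyl's theorem precisely because $V$ is non-resonant, i.e. $\lambda_1,\dots,\lambda_N$ are rationally independent. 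Passing to the limit in the Duhamel form of $\dot a^{\epsilon_j}=\mathcal P^{s/\epsilon_j}(a^{\epsilon_j})$ then shows that $a^0$ is a mild solution of the effective equation (\ref{evpls}) with $a^0(0)=\Psi(u_0)$.

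Finally, by Assumption B the effective equation has a unique solution $v$ starting at $\Psi(u_0)$, and its interval of existence contains $[0,T]$: indeed $a^0$ is such a solution on $[0,T]$ and stays bounded in $h^p$ there, so the upper semi-continuity of $T(\cdot)$ forbids blow-up before $T$ by the standard continuation argument. Uniqueness then forces every subsequential limit from the compactness step to coincide with $v$, hence the whole family converges, $a^\epsilon\to v$ in $C([0,T],h^q)$ for all $q<p$. Consequently $I(v^\epsilon(\cdot))=I(a^\epsilon(\cdot))\to I(v(\cdot))=:I^0(\cdot)$ in $C([0,T],h^q_I)$, using that $\pi_I$ is locally Lipschitz from $h^q$ to $h^q_I$ and the uniform bounds; since $v\in C([0,T],h^p)$ by Assumption B, in fact $I^0\in C([0,T],h^p_I)$. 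That $I^0$ solves the averaged equation (\ref{aipls}) follows by differentiating $I_k^0=I_k(v)$ along $\dot v=R(v)$ and using $dI_k\circ\Phi_\theta=dI_k$ to identify $dI_k(v)R(v)=\int_{\mathbb T^\infty}dI_k(\Phi_\theta v)P(\Phi_\theta v)\,d\theta=\langle F_k\rangle(I^0)$. The main obstacle is the averaging step: making the infinite-dimensional equidistribution quantitative and uniform over bounded frozen data, truncating the unbounded vector field $P$ while controlling the two-derivative loss, and handling the slowly varying argument $a^\epsilon(s)$; the remaining ingredients (compactness, uniqueness, continuity of $\pi_I$) are comparatively soft.
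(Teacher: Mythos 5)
Your proposal is correct in outline, but it reaches the theorem by a genuinely different route than the paper. The paper stays in action--angle variables and proves the averaging estimate directly for the equations $\dot I_k=F_k(I,\varphi)$ (Lemma~4.1): it partitions $[0,\tau]$ into blocks of length $\epsilon T_0$, freezes the actions, replaces the true angle dynamics by the unperturbed flow $\varphi^{m_0}(a_i)+\epsilon^{-1}\Lambda^{m_0}(\tau-a_i)$, and invokes Weyl's theorem; because $G_k$ and $\partial F_k/\partial I_j$ blow up like $I_j^{-1/2}$ near $I_j=0$, it must first excise the modes with $I_l<\gamma$ via the cutoff $\kappa_i$ and track six separate error terms. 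Having shown that every limit point of $\{I^\epsilon\}$ solves the averaged equation, it then needs a separate lifting lemma (Lemma~4.4, via truncated finite-dimensional systems) to produce a solution of the effective equation with the prescribed actions, before Assumption~B yields uniqueness. You instead pass to the interaction representation $a^\epsilon=\Phi_{\tau\Lambda/\epsilon}v^\epsilon$, which cancels the singular term while preserving the actions and all $h^p$-norms, and you average the full (smooth, nonsingular) vector field $P$ rather than the singular action--angle fields; the limit is identified directly as a solution of the effective equation, so the averaged equation for $I^0$ is obtained afterwards by mere projection, and both the small-action cutoff and the lifting lemma disappear. What the paper's route buys is an explicit quantitative error bound \eqref{main-e} for the actions of the $\epsilon$-solutions themselves; what your route buys is a cleaner identification of the limit and a structurally simpler proof. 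Both arguments rest on the same three pillars --- Assumption~A for uniform bounds and Arzel\`a--Ascoli compactness, non-resonance of $V$ plus Weyl equidistribution (the paper's Lemma~3.2, which already provides the uniformity in the starting phase $x_0$ that your shifted windows $[a_i/\epsilon,\,a_i/\epsilon+\delta/\epsilon]$ require), and Assumption~B for uniqueness --- and both share the same minor technical wrinkle, namely that the limiting curve is a priori only in $C([0,T],h^{p-2})$ with bounded $h^p$-norm, so the uniqueness of Assumption~B is being applied in a slightly larger class than $C([0,T],h^p)$; since the paper does exactly the same in Lemma~4.5, this is not a defect of your argument relative to theirs.
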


\begin{proposition} The assumptions A and B hold if  (\ref{pls1}) is a complex Ginzburg-Landau equation
\begin{equation}\dot{u}+\epsilon^{-1}i(\triangle u+V(x)u)=\triangle u-\gamma_R f_p(|u|^2)u-i\gamma_If_q(|u|^2)u, \quad x\in\mathbb{T}^d,
\label{cgl1}
\end{equation}
where the constants $\gamma_R$, $\gamma_I$ satisfy 
\begin{equation}\gamma_R,\; \gamma_I>0,
\label{cgl1c-1}
\end{equation}
the functions $f_p(r)$ and $f_q(r)$ are the monomials $|r|^p$ and $|r|^q$, smoothed out near zero, and
\begin{equation}
0\leqslant p,q<\infty\quad \text{if}\quad d=1,2\quad\text{and}\quad 0\leqslant p,q<\min\{\frac{d}{2},\frac{2}{d-2}\}\quad \text{if}\quad d\geqslant 3.
\label{cgl1c-2}
\end{equation}
\end{proposition}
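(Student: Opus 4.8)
\emph{Proof plan.} The strategy is to verify Assumptions~A and~B by regarding \eqref{cgl1}, rewritten in the slow time $\tau=\epsilon t$ and in the eigen-coordinates $v=\Psi(u)$, as a semilinear parabolic equation. The structural point is that in these coordinates the singular term $-i\epsilon^{-1}A_V$ becomes $\mathrm{diag}\big(-i\epsilon^{-1}\lambda_k\big)$, which is \emph{purely imaginary}; it therefore drops out of every $L^2$-type energy estimate, and this is exactly what will make the a priori bound uniform in $\epsilon$. One uses throughout that for $|m|\le n$ the norm $|v|_m$ is equivalent to $\|u\|_m$ (because $1\le V\in C^n$, the $\zeta_k$ are smooth and $\lambda_k\sim k^{2/d}$ by Weyl asymptotics) and that multiplication by $V$, read off in the basis $\{\zeta_k\}$, is a bounded operator $h^m\to h^m$.

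For Assumption~A, using $\triangle=V-A_V$ one writes \eqref{cgl1} in the variables $v$ as $\dot v=-(1+i\epsilon^{-1})\Lambda v+\mathcal Vv+\mathcal N(v)$, with $\Lambda=\mathrm{diag}(\lambda_k)$, $\mathcal V$ the multiplication by $V$ in the basis $\{\zeta_k\}$, and $\mathcal N$ the two defocusing nonlinear terms written in $v$. For each fixed $\epsilon$ the diagonal part generates the contraction semigroup $\mathrm{diag}\big(e^{-(1+i\epsilon^{-1})\lambda_k\tau}\big)$ on every $h^m$; since $u\mapsto f_p(|u|^2)u$ and $u\mapsto f_q(|u|^2)u$ are smooth maps $H^m\to H^m$ for $m>p_d$ ($H^m$ is an algebra and the $f$'s are smoothed near $0$), $\mathcal N$ is locally Lipschitz $h^m\to h^m$, and a standard Picard/Duhamel argument yields a unique local mild solution in $h^m$. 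To globalise it and get the bound on $[0,T\epsilon^{-1}]$ I would estimate, inductively in the Sobolev index $m$, the quantities $|v|_m^2=\langle A_V^m u,u\rangle$: differentiating, the diagonal part contributes $-|v|_{m+1}^2$ (the $i\epsilon^{-1}\lambda_k$ term, being imaginary, vanishes --- this is the $\epsilon$-uniformity), the term $\langle A_V^m\mathcal Vv,v\rangle$ is $\le C_V|v|_m^2$ and is absorbed after interpolating $|v|_m^2\le\tfrac14|v|_{m+1}^2+C|v|_{m-1}^2$, and the nonlinear contribution is controlled using the defocusing sign $\gamma_R>0$ (which gives the good term $-\gamma_R\int f_p(|u|^2)|u|^2\le0$ at the $L^2$ level, the $\gamma_I$-term contributing nothing there) together with Gagliardo--Nirenberg estimates; one arrives at $\frac{d}{d\tau}|v|_m^2\le-|v|_m^2+C$ with $C$ depending only on $V$, $m$ and the already controlled norms $|v|_0,\dots,|v|_{m-1}$, hence a bound uniform in $\tau\ge0$ and in $\epsilon$. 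Iterating from the immediate $L^2$ bound up to the required index gives Assumption~A (in fact with $C$ independent of $T$).

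For Assumption~B, I would compute the effective equation explicitly. Since every rotation $\Phi_\theta$ commutes with diagonal operators $v\mapsto(c_kv_k)$, averaging leaves the diagonal part $-\Lambda v$ of $P$ (coming from $\triangle u=Vu-A_Vu$) unchanged; it replaces $\mathcal V$ by its diagonal part $\mathrm{diag}\big(\langle V\zeta_k,\zeta_k\rangle\big)$, because $\int_{\mathbb{T}^{\infty}}e^{i(\theta_j-\theta_k)}\,d\theta=\delta_{jk}$; and it sends $\mathcal N$ to $\mathcal N^{\mathrm{eff}}(v)=\int_{\mathbb{T}^{\infty}}\Phi_{-\theta}\mathcal N(\Phi_\theta v)\,d\theta$. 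Thus \eqref{evpls} reads $\dot v=-\Lambda v+\mathrm{diag}\big(\langle V\zeta_k,\zeta_k\rangle\big)v+\mathcal N^{\mathrm{eff}}(v)$, again a semilinear parabolic equation in $h^m$: the linear part generates a contraction semigroup on $h^m$, the diagonal correction is bounded, and $\mathcal N^{\mathrm{eff}}$ is smooth and bounded on bounded sets --- by the already recorded smoothness of $R$, together with the fact that $\Phi_\theta$ acts isometrically on $h^m$ and the average is over the compact group $\mathbb{T}^{\infty}$ --- hence locally Lipschitz $h^m\to h^m$. A Picard/Duhamel argument then gives a unique local mild solution $v\in C([0,T(|v_0|_m)],h^m)$, and the standard dependence of the life-span on the size of the data makes $T(\cdot)$ upper semicontinuous. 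This is Assumption~B.

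The only genuinely technical point is closing the energy estimates in Assumption~A --- dominating the $H^m$-pairing of $f_p(|u|^2)u$ and $f_q(|u|^2)u$ by the dissipation $|v|_{m+1}^2$ plus already-controlled lower norms, which forces the top norm to enter the estimate of the nonlinearity with a power strictly less than two. This is precisely what the subcriticality conditions \eqref{cgl1c-2} provide, and they cannot be weakened; everything else, including the uniformity in $\epsilon$ (automatic once one works in the eigenbasis of $A_V$) and all of Assumption~B, is either routine parabolic theory or bookkeeping.
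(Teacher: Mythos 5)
Your overall scheme is essentially the paper's: for Assumption A, energy estimates on $\langle A_V^m u,u\rangle$ in which the singular term $-i\epsilon^{-1}A_Vu$ drops out because it is skew (this is indeed where the $\epsilon$-uniformity comes from), closed by Sobolev embedding, interpolation and Young under the subcriticality condition (\ref{cgl1c-2}); for Assumption B, an explicit computation showing the effective equation is a semilinear parabolic equation. However, your induction ``$\frac{d}{d\tau}|v|_m^2\le -|v|_m^2+C(|v|_0,\dots,|v|_{m-1})$'' has a genuine gap at the very first step $m=1$, in the treatment of the \emph{Hamiltonian} nonlinearity $-i\gamma_I f_q(|u|^2)u$. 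Its pairing with $A_Vu$ reduces, after integration by parts, to a multiple of $\int |u|^{2q-2}\,\mathrm{Re}(\bar u\nabla u)\,\mathrm{Im}(\bar u\nabla u)\,dx$, which has no sign and is of size $\int|u|^{2q}|\nabla u|^2$. With only the $L^2$ bound $\|u\|_0\le B_2$ available at that stage, H\"older, Sobolev embedding and interpolation bound this term by a power of $\|u\|_2$ equal to roughly $1+s+q$ with $s\ge q(d-2)/2$, which is $<2$ only for small $q$ (about $q<1$ for $d=1,2$ and $q<2/d$ for $d\ge3$) --- far short of the range permitted by (\ref{cgl1c-2}), which allows every $q<\infty$ when $d=1,2$. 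The defocusing sign you invoke helps only with the $\gamma_R$-term; it does nothing for the $\gamma_I$-term. And since the $H^2$ and higher estimates need the uniform $H^1$ bound as a known quantity (so that factors like $\|u\|_1^{2p}$ are constants rather than being interpolated down to $\|u\|_0$, which is what keeps the top power below $2$), the whole chain collapses without it.

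The missing idea, which is the one genuinely non-routine step of the paper's proof, is to exploit the Hamiltonian structure at the $H^1$ level: rewrite the equation as (\ref{cgl-hf}), putting the $\gamma_I$-nonlinearity into the conservative part, and estimate the modified energy $H(u)=\frac12\langle A_Vu,u\rangle+\frac{\epsilon\gamma_I}{2q+2}|u|_{2q+2}^{2q+2}$. Because the Hamiltonian vector field annihilates $dH$, the derivative $\frac{d}{d\tau}H(u)$ only involves the dissipative terms $\triangle u-\gamma_R f_p(|u|^2)u$, and every pairing that appears ($\langle|u|^{2q}u,\triangle u\rangle\le-\|\nabla U_q\|_0^2$, $-\epsilon\gamma_I\gamma_R|u|^{2p+2q+2}_{2p+2q+2}$, etc.) carries a favourable sign; no Gagliardo--Nirenberg is needed there and the bound holds for all $q$ in (\ref{cgl1c-2}). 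Two smaller remarks: the resulting $H^1$ bound grows linearly in $T$, so your claim that the constants can be taken independent of $T$ is an overstatement (and is not needed for Assumption A); and in Assumption B you keep the averaged Hamiltonian nonlinearity $R^3$ in the effective equation whereas the paper discards it after checking $v_k\cdot R^3_k\equiv0$, but either choice yields a locally well-posed semilinear parabolic equation, so that part of your argument is fine.
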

 This work is a continuation of the research started in \cite{hg2013}, where the author proved a similar averaging principle (not for all but for  typical initial data) for a perturbed KdV equation:
 \begin{equation}
 u_t+u_{xxx}-6uu_x=\epsilon f(u)(x), \;x\in\mathbb{T},\;\int_{\mathbb{T}} u(t,x) dx=0,
 \label{kdv}
 \end{equation}
 assuming the perturbation $\epsilon f(u)(\cdot)$ defines a smoothing mapping $u(\cdot)\mapsto f(u)(\cdot)$.  This additional assumption is necessary to guarantee the existence of  an quasi-invariant measure for the perturbed equation (\ref{kdv}),  which plays an essential role in the proof due to the non-linear nature of the unperturbed equation.  Since in the present  paper we deal with perturbations of a linear  equation,  this restriction is not needed.
 
 In \cite{Kuk11}, a result similar to Theorem 0.1 was proved   for weakly nonlinear stochastic CGL equation (\ref{cgl1}). There are many  works on long-time behaviors of solutions for nonlinear Schr\"odinger equations. E.g.  the averaging   principle was justified in \cite{ KK95}  for solutions of Hamiltonian perturbations of (\ref{ls1}), provided that the potential $V(x)$ is non-degenerated and that the initial data $u_0(x)$ is a sum of finitely many Fourier modes.   Several long-time stability   theorems which are applicable to  small amplitude solutions of   nonlinear Schr\"odinger equations were presented  in \cite{Bam99,BD03, PO99,bourgain2000}. The results in these works describe the dynamics over a time scale much longer than the $ \mathcal{O}(\epsilon^{-1})$ that we consider, precisely, over  a time interval of order $\epsilon^{-m}$, with arbitrary $m$ (even of order $\exp{\epsilon^{-\delta}}$ with $\delta>0$ in \cite{Bam99,PO99,bourgain2000}).  These results are obtained under the assumption that the frequencies are completely resonant or highly non-resonant (Diophantine-type), by using the normal form techniques near an equilibrium (this is the reason for which  they only apply to  small amplitude solutions). See  \cite{bou2005} and references therein  for general theory of normal form for PDEs.  In difference with the  mentioned works,  the  research in this paper is based on the classical averaging method for finite dimensional systems,  characterizing  by the existence of slow-fast variables.  It deals with arbitrary  solution of equation (\ref{pls1}) with sufficiently smooth initial data. Also note that the non-resonance assumption (\ref{non-r}) is significantly weaker than those in the mentioned works. 
 \smallskip

\noindent{\bf Plan of the paper}.  In Section 1 we recall some spectral properties of the operator $A_V$. Section 2 is about the action-angle form of the perturbed linear Schr\"odinger equation (\ref{pls1}). In Section 3 we introduce the averaged equation and the corresponding effective equation.  Theorem 0.1 and Proposition 0.2 are proved in Section~4 and Section 5.

\section{Spectral properties of $A_V$}

As in the introduction, $A_V=-\triangle +V(x)$,  $x\in \mathbb{T}^d$, where $1\leqslant V(x)\in C^n(\mathbb{T}^d)$ and $\{\lambda_k\}_{k\geqslant1}$ are the eigenvalues of $A_V$. According to Weyl's law, the $\lambda_k$, $k\geqslant1$,  satisfy the following asympototics
\[
\lambda_k=C_d k^{2/d}+o(k^{2/d}),\quad k\geqslant1,
\]
  Fix an $L^2$-orthogonal basis of eigenfunctions $\{\zeta_k\}_{k\geqslant 1}$ corresponding to the eigenvalues $\{\lambda_k\}_{k\geqslant 1}$, and define the linear mapping $\Psi$ as (\ref{Psi}).   For any $m\in\mathbb{N}$, we have  $\langle A_V^m u,u\rangle =|v|^2_m$,  where $v=\Psi u$.  Noting that  $\langle A_V^m u,u\rangle$ is equivalent to $||u||_m^2$ for $m=1,\dots, n$, since $V(x)$ is $C^n$-smooth,   we have the following:

\begin{lem} For every integer  $p\in[0,n]$  the linear mapping 
 $\Psi: H^p\to h^p$ is   an isomorphism. 
 \end{lem}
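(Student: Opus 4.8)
The plan is to show that the map $\Psi$ intertwines the Sobolev structure on $H^p$ with the weighted $\ell^2$-structure on $h^p$, by first establishing the claim for all integers $m\in\{0,1,\dots,n\}$ and then invoking the fact that $\{\zeta_k\}$ is an $L^2$-orthonormal basis. Concretely, I would begin with $p=0$: since $\{\zeta_k\}$ is an $L^2$-basis, Parseval's identity gives $\|u\|_0^2=\langle u,u\rangle=\sum_k|v_k|^2$, which together with the normalization $\lambda_k\geqslant 1$ (so that $\lambda_k^0=1$) shows that $\Psi:H^0\to h^0$ is an isometric isomorphism (onto, by completeness of $h^0$ and density of finite sums).

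Next, for an integer $m$ with $1\leqslant m\leqslant n$, the key algebraic identity is $\langle A_V^m u,u\rangle=|v|_m^2$. To see this, expand $u=\sum_k v_k\zeta_k$ and use $A_V\zeta_k=\lambda_k\zeta_k$ together with the self-adjointness of $A_V$ on $L^2(\mathbb{T}^d)$: formally $\langle A_V^m u,u\rangle=\sum_{k,l}v_k\bar v_l\langle A_V^m\zeta_k,\zeta_l\rangle=\sum_{k,l}v_k\bar v_l\lambda_k^m\delta_{kl}=\sum_k\lambda_k^m|v_k|^2=|v|_m^2$. Some care is needed to justify that this manipulation is legitimate on all of $H^m$ and not merely on the dense subspace of finite linear combinations; one argues by density, using that both sides are continuous quadratic forms in $u$ on $H^m$ (for the right-hand side this is because $|v|_m$ is a norm on $h^m$, and the finite sums are dense). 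The remaining point is that, because $1\leqslant V\in C^n(\mathbb{T}^d)$, the quadratic form $\langle A_V^m u,u\rangle$ is equivalent to $\|u\|_m^2$ for each such $m$ — this is a standard elliptic-regularity statement: expanding $A_V^m$ as $(-\triangle)^m$ plus lower-order terms with $C^{n-\text{something}}$ coefficients, the top-order part reproduces $\langle(-\triangle)^m u,u\rangle$ and the lower-order remainder is controlled by interpolation. Combining equivalence of norms with the identity yields that $\Psi:H^m\to h^m$ is a bounded linear bijection with bounded inverse, i.e. an isomorphism, for every integer $m\in[0,n]$.

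The main obstacle — really the only subtle point — is the norm-equivalence $\langle A_V^m u,u\rangle\asymp\|u\|_m^2$ for $2\leqslant m\leqslant n$, since for $m=0,1$ it is immediate from the definition of $\|\cdot\|_p$ given in the introduction, but for higher $m$ it genuinely uses the $C^n$-smoothness of $V$ and an iteration/commutator argument: one writes $A_V^m=(-\triangle+V)^m$, and each commutator $[(-\triangle),V]$, $[(-\triangle)^2,V]$, etc., produces differential operators whose coefficients involve derivatives of $V$ up to order $\sim m$, so one needs $n\geqslant m$ for these to make sense and be bounded. The coercivity $\langle A_V^m u,u\rangle\gtrsim\|u\|_m^2$ follows because $A_V\geqslant 1$ as an operator, so $A_V^m\geqslant A_V^{m}$ is comparable from below to $(-\triangle)^m+\mathrm{Id}$ up to controllable lower-order terms absorbed by a small fraction of the leading term (Young's inequality / interpolation). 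Once this equivalence is in hand, the statement of the lemma is a two-line consequence; I would present the norm-equivalence either as a cited standard fact about Schrödinger operators with smooth potentials or prove it by the commutator induction just sketched, and then conclude.
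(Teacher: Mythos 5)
Your argument follows exactly the route the paper takes (and merely sketches): the identity $\langle A_V^m u,u\rangle=|v|_m^2$ obtained from the eigenbasis expansion, combined with the equivalence of the quadratic form $\langle A_V^m u,u\rangle$ with $\|u\|_m^2$ for integer $m\leqslant n$, which uses the $C^n$-smoothness of $V$. Your proposal correctly identifies the norm-equivalence as the only nontrivial point and supplies the standard density and commutator details that the paper leaves implicit, so it is a valid and essentially identical proof.
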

 
 We denote \[C^n_{+1}(\mathbb{T}^d):=\{V(x)\geqslant 1: V(x)\in C^n(\mathbb{T}^d)\}.\]
 For any finite $M\in \mathbb{N}$ consider the mapping
 \[\Lambda^M: C_{+1}^n(\mathbb{T}^d)\to\mathbb{R}^M,\quad V(x)\to (\lambda_1,\cdots,\lambda_M),\]
 and define the open domain $E_M\subset C_{+1}^n(\mathbb{T}^d)$,
 \[E_M:=\{V|\lambda_1<\lambda_2<\cdots<\lambda_M\}.\]
The complement of $E_M$ is a real analytic variety in $C^n(\mathbb{T}^d)$ of codimension at least~2, so $E_M$ is connected. The mapping $\Lambda^M$ is analytic in $E_M$ (see \cite{KK95}).

Let $\mu$ be a Gaussian measure with a non-degenerate correlation operator, supported by the space $C^n(\mathbb{T}^d)$ (see \cite{BG10}).  Then $\mu(C^n_{+1}(\mathbb{T}^d))>0$. Fix $s\in \mathbb{Z}^M\setminus\{0\}$. The set
\[Q_s:=\{V\in E_M|\Lambda^M(V)\cdot s=0\},\]
is closed in $E_M$. Since $\Lambda^M(V)\cdot s\not\equiv 0$ on $E_M$ (e.g. see \cite{KK95}), then $\mu(Q_s)=0$ (see chapter 9 in  \cite{BG10} and the note \cite{BM13}). Since this is true for any $M$ and  $s$ as above, then we have:
\begin{proposition} The non-resonant potentials form a  subset of $C_{+1}^n(\mathbb{T}^d)$ of full \mbox{$\mu$-measure}.
\end{proposition}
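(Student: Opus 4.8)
The plan is to prove the equivalent assertion that the set $R\subset C^n_{+1}(\mathbb{T}^d)$ of \emph{resonant} potentials — those admitting some finite non-zero integer relation $\sum_k\lambda_k(V)s_k=0$ — is $\mu$-null; since by definition the non-resonant potentials are exactly the elements of $C^n_{+1}(\mathbb{T}^d)\setminus R$, and $\mu\big(C^n_{+1}(\mathbb{T}^d)\big)>0$ for the chosen Gaussian $\mu$, this gives the proposition. First I would note that any relation vector $s$ has finite support, say inside $\{1,\dots,M\}$, so that
\[R=\bigcup_{M\geqslant1}\ \bigcup_{s\in\mathbb{Z}^M\setminus\{0\}}\Big\{V\in C^n_{+1}(\mathbb{T}^d):\ \textstyle\sum_{k=1}^M\lambda_k(V)s_k=0\Big\}.\]
This is a countable union, so it suffices to show that each member is $\mu$-null. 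Splitting a member according to whether $V$ lies in the open set $E_M$ or not, I reduce to two claims: (i) for every $M$ and every $s\in\mathbb{Z}^M\setminus\{0\}$ the set $Q_s=\{V\in E_M:\Lambda^M(V)\cdot s=0\}$ has $\mu$-measure $0$; and (ii) $C^n_{+1}(\mathbb{T}^d)\setminus E_M$ has $\mu$-measure $0$.

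For (i), which is the heart of the matter, I would use that on the connected open set $E_M$ the map $\Lambda^M$ is real-analytic (analytic perturbation theory for the simple eigenvalues $\lambda_1<\dots<\lambda_M$, as recalled before the proposition), hence so is the scalar function $\phi_s:=\Lambda^M(\cdot)\cdot s$, and that $\phi_s\not\equiv0$ on $E_M$ (see \cite{KK95}). The remaining input is the fact — which I would cite rather than reprove — that a non-constant real-analytic function on a connected open subset of the support of a Gaussian measure with non-degenerate correlation operator has a $\mu$-null zero set; the mechanism is to reduce, by integrating along a generic Cameron–Martin direction, to the elementary statement that a non-trivial real-analytic function of one real variable has only isolated zeros, the non-degeneracy of $\mu$ guaranteeing that the one-dimensional conditional measures are equivalent to Lebesgue measure. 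This is the content of chapter~9 of \cite{BG10} together with \cite{BM13}. Applying it to $\phi_s$ yields $\mu(Q_s)=0$.

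Claim (ii) follows from the same principle: the complement of $E_M$ in $C^n_{+1}(\mathbb{T}^d)$ is contained in a real-analytic variety of codimension at least $2$, in particular in the zero set of some non-trivial real-analytic function on $C^n_{+1}(\mathbb{T}^d)$, hence it is $\mu$-null. Combining (i) and (ii), $R$ is a countable union of $\mu$-null sets, so $\mu(R)=0$ and the non-resonant potentials fill $C^n_{+1}(\mathbb{T}^d)$ up to a $\mu$-null set, as claimed. The only step I expect to be genuinely delicate is the passage, inside (i), from "$\phi_s$ not identically zero'' to "$\phi_s\neq0$ $\mu$-a.e.'': it requires fixing once and for all the notion of real-analyticity in infinitely many variables, exhibiting a Cameron–Martin direction along which the restriction of $\phi_s$ is non-trivial, and invoking the non-degeneracy of $\mu$ to control the resulting one-dimensional conditional Gaussians — all of which I would import from \cite{BG10,BM13}. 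The nonvanishing $\Lambda^M\cdot s\not\equiv0$ on $E_M$ is classical (no non-trivial finite linear combination of the eigenvalues is constant, as one sees by differentiating in $V$), and I would cite \cite{KK95} for it.
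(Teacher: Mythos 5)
Your argument is correct and follows essentially the same route as the paper: decompose the resonant set as a countable union over $M$ and $s\in\mathbb{Z}^M\setminus\{0\}$, use the analyticity of $\Lambda^M$ on the connected open set $E_M$ together with $\Lambda^M(\cdot)\cdot s\not\equiv0$, and invoke the fact (chapter~9 of \cite{BG10} and \cite{BM13}) that zero sets of non-trivial real-analytic functions are null for a non-degenerate Gaussian measure. Your explicit claim (ii), that $C^n_{+1}(\mathbb{T}^d)\setminus E_M$ is itself $\mu$-null, is a point the paper leaves implicit (it only records that this complement has codimension at least~$2$ to get connectedness of $E_M$), so your write-up is if anything slightly more complete.
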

\section{Equation (\ref{pls1}) in action-angle variables}

For $k=1,2,\dots$, we denote:
\[\Psi_k: H^p\to\mathbb{C},\quad \Psi_k(u)=v_k,\]
(see (\ref{Psi})).
Let $u(t)$ be a solution of equation (\ref{pls1}). Passing to slow time $\tau=\epsilon t$,  we get  for $v_k=\Psi_k(u(\tau))$ equations
\begin{equation}
\dot{v}_k+i\epsilon^{-1}\lambda_kv_k= \Psi_k(\mathcal{P}(\triangle u,\nabla u, u,x)),\quad k\geqslant 1.\label{pvls}
\end{equation}
Since $I_k(v)=\frac{1}{2}|\Psi_k|^2$ is an integral of motion for the Schr\"odinger equation (\ref{ls1}), we have 
\begin{equation}
\dot{ I}_k=(\Psi_k(\mathcal{P}(\triangle u,\nabla u, u,x)),v_k):=F_k(v),\quad k\geqslant 1\label{pils}
\end{equation}
(Here and below $( \cdot,\cdot)$ indicates the real scalar product in $\mathbb{C}$, i.e. $(u,v)=Re\; u\bar{v}$.)

Denote $\varphi_k=\text{Arg}\;v_k$, if $v_k\neq 0$, and $\varphi_k=0$, if $v_k=0$, $k\geqslant 1$. Using equation (\ref{pvls}), we get
\begin{equation}
\dot{\varphi}_k=\epsilon^{-1}\lambda_k+ |v_k|^{-2}( \Psi_k(\mathcal{P}(\triangle u,\nabla u, u, x)),iv_k),\quad\text{if}\quad v_k\neq 0,\quad k\geqslant 1\label{ppls}
\end{equation}
Denoting for brevity, the vector field in equation (\ref{ppls}) by $ \epsilon^{-1}\lambda_k+G_k(v)$, we rewrite the equation for the pair $(I_k,\varphi_k)(k\geqslant 1)$ as
\begin{equation}
\dot{I}_k=F_k(v)= F_k(I,\varphi),\quad
\dot{\varphi}_k=\epsilon^{-1}\lambda_k+G_k(v).\label{pipls}
\end{equation}
(Note that the second equation has a singularity when $I_k=0$.)
We denote
\[F(I,\varphi)=(F_1(I,\varphi),F_2(I,\varphi),\cdots).\]
The following result is well known, see e.g. Section 5.5.3 in \cite{sobo1996}.

\begin{lem} If $f(x):\mathbb{C}^{m}\to \mathbb{C}^{N}$ is $C^{\infty}$, then the mapping 
\[M_f: H^p(\mathbb{T}^d,\mathbb{C}^m)\to H^p(\mathbb{T}^d,\mathbb{C}^N),\quad u\mapsto f(u),\]
is $C^{\infty}$-smooth for $p>d/2$. Moreover, it is  bounded and  Lipschitz, uniformly on bounded subsets of $H^p(\mathbb{T}^d,\mathbb{C}^m)$.
\label{lem-smooth}
\end{lem}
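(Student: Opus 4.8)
The plan is to reduce everything to two standard facts about the scale $H^p(\mathbb{T}^d)$ in the range $p>d/2$, namely that it is a Banach algebra under pointwise multiplication and that it admits a Moser-type tame estimate for compositions, and then to bootstrap both the smoothness and the Lipschitz claims from these. The algebra property itself rests on the Sobolev embedding $H^p(\mathbb{T}^d)\hookrightarrow C(\mathbb{T}^d)\cap L^\infty(\mathbb{T}^d)$, which holds precisely because $p>d/2$, together with the bilinear estimate $\|wz\|_p\leqslant C(\|w\|_{L^\infty}\|z\|_p+\|w\|_p\|z\|_{L^\infty})$.

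First I would settle boundedness. The embedding guarantees that any $u$ with $\|u\|_p\leqslant R$ takes values in a fixed ball $B\subset\mathbb{C}^m$ of radius $\sim R$, so that $\|u\|_{L^\infty}\leqslant M=M(R)$. The key analytic input is the composition estimate
\[\|f(u)\|_p\leqslant C\big(M,\|f\|_{C^{[p]+1}(B)}\big)\,(1+\|u\|_p).\]
For integer $p$ this follows by differentiating $f(u)$ through the Fa\`a di Bruno formula and estimating each resulting product $f^{(k)}(u)\prod_j D^{\alpha_j}u$, with $\sum_j|\alpha_j|=p$, in $L^2$ via the multiplicative Gagliardo--Nirenberg inequalities, which bound such a product by $\|u\|_{L^\infty}^{k-1}\|u\|_p$; since $\|u\|_{L^\infty}$ is controlled by $M$, each term is dominated by $C(M)\|u\|_p$ and the zeroth-order term by $C(M)$ on the finite-volume torus. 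For noninteger $p$ one either interpolates between consecutive integers or runs the same argument with a Littlewood--Paley/paraproduct decomposition. Combined with the embedding this shows that $M_f$ maps bounded sets of $H^p$ to bounded sets.

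Next I would establish smoothness by identifying the candidate derivative. The natural guess is that $M_f$ is Fr\'echet differentiable with $DM_f(u)h=M_{Df}(u)\,h$, the operator of pointwise multiplication by $Df(u)\in H^p$, which is bounded on $H^p$ by the algebra property. To verify it I would write the increment via the fundamental theorem of calculus,
\[f(u+h)-f(u)-Df(u)h=\int_0^1\big(Df(u+th)-Df(u)\big)h\,dt,\]
and bound the right-hand side in $H^p$ using the algebra inequality together with the continuity of the composition operator $M_{Df}$ (a consequence of the boundedness estimate applied to the smooth map $Df$), obtaining a bound $o(\|h\|_p)$ as $\|h\|_p\to0$. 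Higher-order smoothness then follows by induction: since $Df$ is again $C^\infty$, $M_{Df}$ is $C^{k-1}$ by the inductive hypothesis, and the formula $DM_f=(\text{multiplication})\circ M_{Df}$ exhibits $M_f$ as $C^k$; as this holds for every $k$, $M_f$ is $C^\infty$.

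Finally the Lipschitz bound on bounded sets comes from the same device applied to $f$ itself,
\[f(u)-f(v)=\int_0^1 Df\big(v+t(u-v)\big)(u-v)\,dt,\]
so that $\|f(u)-f(v)\|_p\leqslant C\sup_{t\in[0,1]}\|Df(v+t(u-v))\|_p\,\|u-v\|_p$ by the algebra property, and the supremum is controlled by the boundedness estimate because $v+t(u-v)$ remains in a fixed ball whenever $u,v$ do. I expect the main obstacle to be the Moser/tame estimate for noninteger $p$: for integer $p$ it is a direct, if lengthy, Fa\`a di Bruno plus Gagliardo--Nirenberg computation, whereas for fractional $p$ one must interpolate carefully or invoke a paraproduct argument to handle the low--high frequency interactions. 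Everything else is a routine consequence of the algebra property once this single estimate is in hand.
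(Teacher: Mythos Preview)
Your outline is correct and follows the standard route to this well-known result. The paper does not give its own proof: it simply records the lemma as ``well known'' and refers the reader to Section~5.5.3 of \cite{sobo1996}, so there is nothing to compare beyond noting that the argument you sketch (algebra property of $H^p$ for $p>d/2$, Moser-type composition estimate, and bootstrapping of Fr\'echet derivatives via $DM_f(u)h=Df(u)h$) is precisely the standard proof one finds in such references.
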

In the lemma below, $P_k$ and $P_k^j$ are some fixed continuous functions.
\begin{lem}For  any $j, k\in \mathbb{N}$, we have for any $p>p_d$
\smallskip

(i)The function $F_k(v)$ is smooth in each space $h^p$.

(ii) For any $\delta>0$, the function $G_k(v)\chi_{\{I_k\geqslant \delta\}}$ is bounded by $\delta^{-1/2}P_k(|v|_p)$.

(iii)For any  $\delta>0$, the function $\frac{\partial F_k}{\partial I_j}(I,\varphi)\chi_{\{I_j\geqslant\delta\}}$ is bounded by $\delta^{-1/2}P_k^j(|v|_p)$.

(iv) The function $\frac{\partial F_k}{\partial \varphi_j}(I,\varphi)$ is bounded by $P_k^j(|v|_p)$ and  for any $m\in\mathbb{N}$ and any $(I_1,\cdots,I_m)\in\mathbb{R}^m_+$, the fucntion $F_k(I_1,\varphi_1,\cdots,I_m,\varphi_m,0,\cdots)$ is smooth on $\mathbb{T}^m$.
\label{lem-lpc}
\end{lem}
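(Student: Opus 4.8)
The plan is to reduce each item to Lemma~\ref{lem-smooth} together with the fact, established in Section~1, that $\Psi\colon H^{p}\to h^{p}$ is an isomorphism for $0\le p\le n$. For $u=\Psi^{-1}v$ abbreviate $Q_{k}(v):=\Psi_{k}\big(\mathcal{P}(\triangle u,\nabla u,u,x)\big)$, so that $F_{k}(v)=\big(Q_{k}(v),v_{k}\big)=\mathrm{Re}\big(Q_{k}(v)\overline{v_{k}}\big)$ and $G_{k}(v)=|v_{k}|^{-2}\big(Q_{k}(v),iv_{k}\big)$. Since $p>p_{d}$, the map $u\mapsto\mathcal{P}(\triangle u,\nabla u,u,x)$ is $C^{\infty}$ from $H^{p}$ to $H^{p-2}$ and bounded and Lipschitz on bounded sets (Lemma~\ref{lem-smooth}, applied to $\mathcal{P}$ precomposed with the bounded linear map $u\mapsto(\triangle u,\nabla u,u)$, which costs two derivatives); conjugating with $\Psi$ shows that $v\mapsto(Q_{k}(v))_{k\ge1}$ has the same properties as a map $h^{p}\to h^{p-2}$. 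Composing with the bounded coordinate functional $h^{p-2}\to\mathbb{C}$, each $Q_{k}\colon h^{p}\to\mathbb{C}$ is $C^{\infty}$, and on every ball $\{|v|_{p}\le r\}$ it is bounded together with its differential by a continuous nondecreasing function $C_{k}(r)$ (on the convex ball the Lipschitz constant of a $C^{1}$ map dominates its differential).

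Item (i) is then immediate: $F_{k}=\mathrm{Re}(Q_{k}(v)\overline{v_{k}})$ is the real part of a product of the smooth map $Q_{k}$ with the bounded linear functional $v\mapsto v_{k}$, hence it is smooth on every $h^{p}$; being a product of functions bounded and Lipschitz on each ball of $h^{p}$, it is Lipschitz there, so $dF_{k}(v)$ is bounded on $\{|v|_{p}\le r\}$ by some continuous $\widetilde C_{k}(r)$. For (ii), $|(Q_{k}(v),iv_{k})|\le|Q_{k}(v)|\,|v_{k}|$ gives $|G_{k}(v)|\le|v_{k}|^{-1}|Q_{k}(v)|$; on $\{I_{k}\ge\delta\}$ one has $|v_{k}|=(2I_{k})^{1/2}\ge(2\delta)^{1/2}$, hence $|G_{k}(v)|\,\chi_{\{I_{k}\ge\delta\}}\le(2\delta)^{-1/2}C_{k}(|v|_{p})$, which is the claimed bound with $P_{k}:=2^{-1/2}C_{k}$.

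For (iii) and (iv) I pass to the action-angle chart $v_{j}=\sqrt{2I_{j}}\,e^{i\varphi_{j}}$ on the region where the relevant action is positive, where $|\partial v_{j}/\partial I_{j}|=(2I_{j})^{-1/2}$ and $|\partial v_{j}/\partial\varphi_{j}|=|v_{j}|=(2I_{j})^{1/2}$. The chain rule and the bound on $dF_{k}$ give
\[
\Big|\frac{\partial F_{k}}{\partial I_{j}}(I,\varphi)\Big|\le(2I_{j})^{-1/2}\,\widetilde C_{k}(|v|_{p}),\qquad\Big|\frac{\partial F_{k}}{\partial\varphi_{j}}(I,\varphi)\Big|\le|v_{j}|\,\widetilde C_{k}(|v|_{p}).
\]
On $\{I_{j}\ge\delta\}$ the first right-hand side is $\le(2\delta)^{-1/2}\widetilde C_{k}(|v|_{p})$, which is (iii) with $P_{k}^{j}:=2^{-1/2}\widetilde C_{k}$; the second, being $\le\lambda_{j}^{-p/2}|v|_{p}\,\widetilde C_{k}(|v|_{p})$, has no singularity and gives the bound in (iv). Finally, for the last assertion of (iv), fix $m$ and $(I_{1},\dots,I_{m})\in\mathbb{R}_{+}^{m}$: the map $\mathbb{T}^{m}\to h^{p}$, $(\varphi_{1},\dots,\varphi_{m})\mapsto(\sqrt{2I_{1}}e^{i\varphi_{1}},\dots,\sqrt{2I_{m}}e^{i\varphi_{m}},0,0,\dots)$, is real-analytic with range in a fixed finite-dimensional subspace of $h^{p}$, so its composition with the smooth function $F_{k}\colon h^{p}\to\mathbb{R}$ is $C^{\infty}$ on $\mathbb{T}^{m}$.

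Almost nothing here is a real obstacle; the one point requiring care is the behaviour near the singular locus $\{I_{j}=0\}$. One has to verify that $\partial v_{j}/\partial I_{j}$ contributes exactly the single negative power $(2I_{j})^{-1/2}$ — so that multiplying by $\chi_{\{I_{j}\ge\delta\}}$ produces precisely a $\delta^{-1/2}$ factor in (ii) and (iii) — and that no angle $\varphi_{j}$ with $I_{j}=0$ actually enters $F_{k}$, which holds because $F_{k}$ sees $\varphi_{j}$ only through $v_{j}=\sqrt{2I_{j}}e^{i\varphi_{j}}$. Everything else is a direct consequence of Lemma~\ref{lem-smooth} and the equivalence of $\|\cdot\|_{p}$ with $|\Psi\cdot|_{p}$.
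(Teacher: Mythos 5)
Your proof is correct and follows essentially the same route as the paper, which simply invokes the formulas \eqref{pils}--\eqref{ppls}, Lemmata 1.1 and \ref{lem-smooth}, and the chain rule; you have merely written out the details. The only cosmetic point is that in (iii) and (iv) the coordinate direction $e_j$ has $h^p$-norm $\lambda_j^{p/2}$, so this factor should be absorbed into your constants $P_k^j$ — harmless, since the lemma allows them to depend on $j$.
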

\begin{proof} \quad Item (i) and (ii) follow directly  from (\ref{pils}), (\ref{ppls}), Lemmata 1.1 and  2.1. Item (iii) and (iv) follow directly  from item (i) and the chain rule.
\end{proof}

Denote
\begin{equation}\Pi_{I,\varphi}: h^p\to h^p_{I}\times \mathbb{T}^{\infty}, \quad\Pi_{I,\varphi}(v)=(I(v),\varphi(v)).
\label{abuse-n}
\end{equation}

\begin{defn} Let assumption A holds. Then for any $p\geqslant p_d+2$ and $T>0$, we call a curve $(I(\tau),\varphi(\tau))$, $\tau\in[0,T]$, a regular solution of equation (\ref{pipls}), if there is a solution $u(t)\in H^p$ of equation (\ref{pls1}) such that
\[ \Pi_{I,\varphi}(\Psi(u(\epsilon^{-1}\tau)))=(I(\tau),\varphi(\tau))\in h^p_{I}\times \mathbb{T}^{\infty}, \quad \tau\in[0,T].\]
\end{defn}
Note that if $(I(\tau),\varphi(\tau))$ is a regular solution, then each $I_j(\tau)$ is a $C^1$-function, while $\varphi_j(\tau)$ may be discontinuous at points $\tau$, where $I_j(\tau)=0$.

For any $p\geqslant p_d+2$, let $(I(\tau),\varphi(\tau))$ be a regular solution of (\ref{pipls}) such that $|I(0)|_p\leqslant M_0$. Then by assumption A,  for any $\epsilon>0$ and $T>0$, we have 
\begin{equation}|I(\tau)|^{\sim}_p=\frac{1}{2}|v(p)|_p^2\leqslant C(p,M_0,T),\quad t\in [0,T].\label{lemma2.3}
\end{equation}

\section{ Averaged equation and Effective equation }

     For a function $f$ on a Hilbert space $H$, we write $f\in Lip_{loc}(H)$ if
      \begin{equation} |f(u_1)-f(u_2)|\leqslant P(R)||u_1-u_2||, \quad  \text{if}  \quad ||u_1||, ||u_2||\leqslant R, \label{lip1}
      \end{equation}
    for a suitable continuous function $P$ which depends on $f$.
      Clearly, the set of functions $Lip_{loc}(H)$ is an algebra.  By Lemma \ref{lem-smooth},  
     \begin{equation}
       F_k(v)\in Lip_{loc}(h^p), \quad   k\in \mathbb{N},\; p>p_d.
      \label{lp_fk}
      \end{equation}
    Let $f\in Lip_{loc}(h^{p})$ and $v\in h^{p_1}$, where $ p_1>p$. Denoting by $\Pi^M, M\geqslant 1$ the projection
     \[ \Pi^M: h^0 \mapsto h^0, \quad (v_1,v_2,\cdots) \mapsto (v_1,\cdots,v_M, 0,\cdots),\]
     we have\[|v-\Pi^M v|_{p}\leqslant \lambda_M^{-(p_1-p)/2}|v|_{p_1}.\]
     Accordingly, 
     \begin{equation}
     |f(v)-f(\Pi^M v)|\leqslant P(|v|_{p})\lambda_M^{-(p_1-p)/2}|v|_{p_1}.\label{lip}
     \end{equation}
     We will denote $v^M=(v_1,\dots,v_M)$ and identify $v^M$ with $(v_1,\dots,v_M,0,\dots)$ if needed. Similar notations will be used for vectors $\theta=(\theta_1,\theta_2,\dots)\in\mathbb{T}^{\infty}$ and vectors \mbox{$I=(I_1,\dots)\in h^p_I$}. 
     
     The torus $\mathbb{T}^M$ acts on the space $\Pi_M h^0$ by linear transformations $\Phi_{\theta^M}$, $\theta^M \in \mathbb{T}^M$, where $\Phi_{\theta^M}: (I^M, \varphi^M)\mapsto (I^M, \varphi^M+\theta^M)$. Similarly, the tous $\mathbb{T}^{\infty}$ acts on $h^0$ by linear transformations $\Phi_{\theta}: (I, \varphi) \mapsto(I,\varphi +\theta)$ with $\theta\in \mathbb{T}^{\infty}$ .
     
     For a function $f\in Lip_{loc}(h^{p})$ and any positive integer $N$, we define the average of $f$ in the first $N$ angles as 
     \[\langle f\rangle_N(v)=\int_{\mathbb{T}^N} f\Big((\Phi_{\theta^N}\oplus \text{id})(v)\Big)d\theta^N,\]
     and define the averaging in all angles as \[\langle f\rangle_{\varphi}(v)=\int_{\mathbb{T}^{\infty}}f(\Phi_{\theta}(v))d\theta,\]
     where  $d\theta$ is the Haar measure on $\mathbb{T}^{\infty}$.  We will denote $\langle \cdot\rangle_{\varphi}$ as $\langle\cdot\rangle$ when there is no confusion. The estimate (\ref{lip}) readily implies that \[|\langle f\rangle_N(v)-\langle f\rangle(v)|\leqslant P(R) 	\lambda_N^{-(p_1-p)/2},\quad \text{if} \quad |v|_{p_1}\leqslant R.\]
     Let $v=(I, \varphi)$, then $\langle f\rangle_N $ is a function independent of $\varphi_1, \cdots, \varphi_N$, and $\langle f\rangle$ is independent of $\varphi$. Thus $\langle f\rangle$ can be written as $\langle f\rangle(I)$.
     \smallskip
     
     \begin{lem}(See \cite{KP08}). Let $f\in Lip_{loc}(h^{p})$, then
     \begin{enumerate}
     \item[(i)] Functions $\langle f\rangle_N(v)$ and $\langle f\rangle$ satisfy (\ref{lip1}) with the same function $P$ as $f$ and take the same value at the origin.
     \item[(ii)] They are smooth if $f$ is. If $f$ is $C^{\infty}$-smooth, then for any $M$,  $ \langle f\rangle(I) $ is a smooth function of the first $M$ components  $I_1,\cdots, I_M$ of the vector $I$.
            \end{enumerate}
     \end{lem}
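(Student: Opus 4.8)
\smallskip
\noindent\emph{Proof proposal.} The one structural fact behind everything is that each of the rotations $\Phi_{\theta^N}\oplus\mathrm{id}$ and $\Phi_\theta$ acts on $h^p$ as a linear \emph{isometry}, since $|e^{i\vartheta}z|=|z|$. For part (i): if $|v_1|_p,|v_2|_p\leqslant R$ and $L$ denotes either rotation, then also $|Lv_1|_p,|Lv_2|_p\leqslant R$, so by (\ref{lip1}) and linearity of $L$, $|f(Lv_1)-f(Lv_2)|\leqslant P(R)\,|L(v_1-v_2)|_p=P(R)|v_1-v_2|_p$; integrating this against the (normalized) Haar measure over $\mathbb{T}^N$, resp.\ $\mathbb{T}^\infty$, gives the asserted Lipschitz bound for $\langle f\rangle_N$, resp.\ $\langle f\rangle$, with the same function $P$. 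As $\Phi_\theta(0)=0$, both averages equal $f(0)$ at $v=0$. For the smoothness of $\langle f\rangle_N$ in part (ii), I would differentiate under the integral sign over the \emph{compact} torus $\mathbb{T}^N$: writing $L=\Phi_{\theta^N}\oplus\mathrm{id}$, the chain rule gives $D^j_v[f(Lv)]=L^{*}D^jf(Lv)$, which for $f\in C^k$, $j\leqslant k$, is jointly continuous in $(v,\theta^N)$ and, for $v$ ranging in a bounded set, bounded uniformly in $\theta^N$; hence the classical theorem on parameter integrals yields $\langle f\rangle_N\in C^k$, so $C^\infty$ if $f$ is.

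The same scheme gives smoothness of $\langle f\rangle$, but here the parameter domain is the infinite torus and the legitimacy of differentiating under the integral rests on the following point: for any $v_0\in h^p$ the orbit $\mathcal O_{v_0}:=\{\Phi_\theta v_0:\theta\in\mathbb{T}^\infty\}$ is \emph{compact} in $h^p$. Indeed it is the image of the compact group $\mathbb{T}^\infty$ (Tikhonov topology) under $\theta\mapsto\Phi_\theta v_0$, and this map is continuous because $|\Phi_{\theta'}v_0-\Phi_\theta v_0|_p^2=\sum_k|e^{i\theta'_k}-e^{i\theta_k}|^2|(v_0)_k|^2\lambda_k^p\to0$ as $\theta'\to\theta$, by dominated convergence (the series is dominated by $4\sum_k|(v_0)_k|^2\lambda_k^p=4|v_0|_p^2<\infty$). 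On a small enough neighbourhood $U$ of the compact set $\mathcal O_{v_0}$ all derivatives of $f$ up to any prescribed order are bounded (cover $\mathcal O_{v_0}$ by finitely many balls on which the relevant $D^jf$ are bounded), and since $\Phi_\theta$ is an isometry, $\Phi_\theta v\in U$ for all $\theta$ once $v$ is close enough to $v_0$. Thus $f(\Phi_\theta v)$ and all its $v$-derivatives $\Phi_\theta^{*}D^jf(\Phi_\theta v)$ are bounded and jointly continuous in $(\theta,v)$ near $v_0$, and differentiating under the Haar integral shows $\langle f\rangle\in C^\infty$ near $v_0$ (hence on all of $h^p$), with $D^j\langle f\rangle(v)=\int_{\mathbb{T}^\infty}\Phi_\theta^{*}D^jf(\Phi_\theta v)\,d\theta$.

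For the last assertion, that $\langle f\rangle$ is a $C^\infty$ function of $I_1,\dots,I_M$ (which is \emph{not} automatic from the $v$-smoothness, because the coordinates $I_k=\tfrac12|v_k|^2$ are singular at $v_k=0$), I would use the rotation invariance $\langle f\rangle\circ\Phi_\theta=\langle f\rangle$. Fix any $v^0\in h^p$ with $(v^0)_k=0$ for $k\leqslant M$ (so that the prescribed values $I_k$, $k>M$, are encoded by $|(v^0)_k|^2=2I_k$), and consider $w\mapsto\langle f\rangle(v^0+w)$, $w\in\mathbb{C}^M$. By the previous step this is $C^\infty$ on $\mathbb{C}^M$, and it is invariant under the coordinatewise torus action $w\mapsto\Phi_{\theta^M}w$, since $\Phi_{\theta^M}(v^0+w)=v^0+\Phi_{\theta^M}w$. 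By G.~Schwarz's theorem on smooth functions invariant under a compact group action---here $\mathbb{T}^M$ acting coordinatewise on $\mathbb{C}^M$, whose ring of polynomial invariants is generated by $|v_1|^2,\dots,|v_M|^2$---such a function is a $C^\infty$ function of $|w_1|^2,\dots,|w_M|^2$, i.e.\ of $2I_1,\dots,2I_M$, as claimed.

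\smallskip
\noindent The routine parts are (i) and the $\langle f\rangle_N$ half of (ii). The real obstacle is the smoothness of $\langle f\rangle$: one must differentiate under a Haar integral over an \emph{infinite-dimensional} group, and the device that makes this work is the compactness of the orbit $\{\Phi_\theta v_0\}$ in $h^p$, which is exactly what turns ``$f\in C^\infty$'' into the uniform control of $f$ and its derivatives along the orbit that the differentiation theorem requires. Once that is in place, the descent to smoothness in $(I_1,\dots,I_M)$ is standard modulo the invariant-function theorem.
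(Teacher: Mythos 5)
Your proof is correct, and for the only nontrivial part of the lemma it takes a genuinely different route from the paper. Both arguments start from the same mechanism, namely the rotation invariance $\langle f\rangle\circ\Phi_\theta=\langle f\rangle$, so that $\langle f\rangle$ depends on $v_1,\dots,v_M$ only through quantities invariant under the coordinatewise $\mathbb{T}^M$-action; the difference is in which classical theorem converts ``smooth and invariant'' into ``smooth function of the invariants $I_1,\dots,I_M$''. The paper restricts $\langle f\rangle$ to the real section $v_j=\sqrt{2I_j}$, observes that the restriction is even in each variable $\sqrt{I_j}$, and invokes Whitney's even-function theorem, for which it gives a self-contained infinite-dimensional proof (Lemma~A in the Appendix, proved one variable at a time and then iterated). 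You instead freeze the tail, view $w\mapsto\langle f\rangle(v^0+w)$ as a $\mathbb{T}^M$-invariant smooth function on $\mathbb{C}^M$, and apply G.~Schwarz's theorem with the invariant ring generated by $|w_1|^2,\dots,|w_M|^2$. Schwarz's theorem is the heavier external tool but treats all $M$ variables at once and avoids the polar-coordinate/evenness bookkeeping; the paper's Whitney route is more elementary and keeps the argument self-contained. A further difference is that you justify carefully the two steps the paper dismisses as obvious: the Lipschitz bound in (i) via the isometry property of $\Phi_\theta$, and especially the differentiation under the Haar integral over the infinite-dimensional torus, where your compactness-of-the-orbit argument (continuity of $\theta\mapsto\Phi_\theta v_0$ from the Tikhonov topology into $h^p$ by dominated convergence, plus a uniform tubular neighbourhood on which the derivatives of $f$ are bounded) is exactly the missing uniform control; this is a worthwhile addition rather than a deviation.
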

     \begin{proof}
     \quad Item (i) and the first statement of item (ii) is obvious. Notice that $\langle f\rangle(v)=\langle f\rangle(\sqrt{I_1},\dots)$  is even on each variable $\sqrt{I_j}$, 
     $j\geqslant 1$, i.e. \[\langle f\rangle (\dots, -\sqrt{I_j},\dots)=\langle f\rangle(\dots,\sqrt{I_j},\dots),\quad  j\geqslant1.\]
     Now the second statement of item (ii) follows from  Whitney's  theorem (see Lemma~A in the Appendix).
     \end{proof}

  \smallskip
  
   Denote $C^{0+1}(\mathbb{T}^n)$ the set of all Lipschitz functions on $\mathbb{T}^n$. The following result is a version of the classical Weyl theorem.
      
   \begin{lem} Let $f\in C^{0+1}(\mathbb{T}^n)$ for some $n\in\mathbb{N}$.  For any non-resonant vector $\omega\in\mathbb{R}^{n}$  (see (\ref{non-r})) and any $\delta>0$,  there exists $T_0>0$ such that if $T\geqslant T_0$, $g\in C(\mathbb{T}^n)$ and $|g-f|\leqslant \delta/3$, then we have
   \[\Big|\frac{1}{T}\int_0^T g(x_0+\omega t)dt-\langle g\rangle\Big|\leqslant \delta,\]
   uniformly in $x_0\in\mathbb{T}^n$. 
   \end{lem}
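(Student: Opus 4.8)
The plan is to run the classical Weyl equidistribution argument in quantitative form, the key organizational point being that the threshold time $T_0$ is extracted from a trigonometric-polynomial approximant of $f$ \emph{before} $g$ enters — which is precisely why the statement fixes $f$ and allows an arbitrary nearby $g$. First I would approximate $f$: since $f\in C^{0+1}(\mathbb{T}^n)\subset C(\mathbb{T}^n)$, Fej\'er's theorem supplies a trigonometric polynomial $P(x)=\sum_{|k|\leqslant K}c_k e^{i k\cdot x}$ with $\|f-P\|_{C(\mathbb{T}^n)}\leqslant\delta/12$; the degree $K$ and the coefficients $c_k$ depend only on $f$ and $\delta$, and $\langle P\rangle=c_0$. (Only the continuity of $f$ is used here; the Lipschitz hypothesis is not needed for this lemma.)

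Next, for this fixed $P$, I would use the elementary bound $\big|\frac1T\int_0^T e^{i(k\cdot\omega)t}\,dt\big|\leqslant \frac{2}{|k\cdot\omega|\,T}$ for $k\neq0$ to obtain, uniformly in $x_0\in\mathbb{T}^n$,
\[
\Big|\frac1T\int_0^T P(x_0+\omega t)\,dt-\langle P\rangle\Big|
=\Big|\sum_{0<|k|\leqslant K}c_k e^{i k\cdot x_0}\,\frac1T\int_0^T e^{i(k\cdot\omega)t}\,dt\Big|
\leqslant \frac{2\sum_{0<|k|\leqslant K}|c_k|}{T\,\min_{0<|k|\leqslant K}|k\cdot\omega|}.
\]
This is the one place the hypothesis on $\omega$ is used: by the non-resonance condition (\ref{non-r}), $k\cdot\omega\neq0$ for every $k\in\mathbb{Z}^n\setminus\{0\}$, and since the minimum runs over the \emph{finite} set $\{0<|k|\leqslant K\}$ it is strictly positive. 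Hence I can fix $T_0=T_0(f,\omega,\delta)>0$ so large that the right-hand side is $\leqslant\delta/6$ for all $T\geqslant T_0$.

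Finally, for any $g\in C(\mathbb{T}^n)$ with $|g-f|\leqslant\delta/3$ one has $\|g-P\|_{C(\mathbb{T}^n)}\leqslant\delta/3+\delta/12$ and $|\langle g\rangle-\langle P\rangle|\leqslant\|g-P\|_{C(\mathbb{T}^n)}$, so writing $g=(g-P)+P$ and invoking the previous step gives, for $T\geqslant T_0$ and uniformly in $x_0$,
\[
\Big|\frac1T\int_0^T g(x_0+\omega t)\,dt-\langle g\rangle\Big|
\leqslant 2\|g-P\|_{C(\mathbb{T}^n)}+\Big|\frac1T\int_0^T P(x_0+\omega t)\,dt-\langle P\rangle\Big|
\leqslant 2\Big(\frac\delta3+\frac\delta{12}\Big)+\frac\delta6=\delta,
\]
which is the claim. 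The argument is essentially routine; the only point requiring care is the apportioning of $\delta$ among the three error sources — the polynomial approximation of $f$, the oscillatory decay of the averaged $P$, and the prescribed gap $\|g-f\|\leqslant\delta/3$ — together with the observation that $T_0$ must be pinned down from $f$ alone rather than from $g$. I do not foresee a serious analytic obstacle.
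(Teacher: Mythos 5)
Your proof is correct and structurally the same as the paper's: both establish equidistribution for a reference object determined by $f$ alone (so that $T_0$ depends only on $f$, $\omega$, $\delta$) and then transfer to $g$ by a triangle inequality exploiting $|g-f|\leqslant\delta/3$. The only difference is that the paper compares $g$ directly to $f$ and cites the classical Weyl theorem for $f$ with error $\delta/3$, whereas you inline the standard proof of that step via Fej\'er approximation and the oscillatory-integral bound; your remark that only continuity of $f$ (not the Lipschitz hypothesis) is needed for this qualitative statement is also accurate.
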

   \begin{proof} It is well known that
   for any $\delta>0$ and non-resonant vector $\omega\in\mathbb{R}^n$, there exists $T_0>0$ such that 
   \[\Big|\frac{1}{T}\int_0^T f(x_0+\omega t)dt-\langle f\rangle\Big|\leqslant \delta/3,\quad \forall T\geqslant T_0,\]
    (see e.g. Lemma 2.2 in \cite{hg2013}).
   Therefore if $T\geqslant T_0$, $g\in C(\mathbb{T}^n)$ and $|g-f|\leqslant \delta/3$, then
   \[\begin{split}\Big|\frac{1}{T}\int_0^T g(x_0+\omega t)dt-\langle g\rangle\Big|
   &\leqslant \Big|\frac{1}{T}\int_0^T f(x_0+\omega t)dt-\langle f\rangle\Big|\\
   &+\frac{1}{T}\int_0^T| f(x_0+\omega t)-g(x_0+\omega t)|dt+|\langle f\rangle-\langle g\rangle|\leqslant \delta.
   \end{split}\]
   This finishes the proof of the lemma.
   \end{proof}
   

We  denote $P_k(v)=\Psi_k(\mathcal{P}(\triangle u, \nabla u,u,x))|_{u=\Psi^{-1}v}$, then equations (\ref{pipls}) becomes
\begin{equation}
\dot{I}_k=(v_k,P_k(v)),\quad \dot{\varphi}_k=\epsilon^{-1}\lambda_k+G_k(v),\quad k\geqslant 1.\label{slowi}
\end{equation}
The averaged equations have the form
\begin{equation}
\dot{J}_k=\langle(v_k,P_k)\rangle_{\varphi}(J),\quad k\geqslant 1,\label{averaged1}
\end{equation}
 i.e.
\begin{equation}
\langle(v_k,P_k)\rangle_{\varphi}=\int_{\mathbb{T}^{\infty}}(v_k e^{i\theta_k},P_k(\Phi_{\theta}v))d\theta=(v_k,R_k(v)),\label{averaged2}
\end{equation}
with
\begin{equation}
R_k(v)=\int_{\mathbb{T}}\Phi_{-\theta_k}P_k(\Phi_{\theta})d\theta.
\label{effective-r}
\end{equation}
Similar to equation (\ref{pls1}), for any $T>0$, we call a curve $J\in C([0,T], h^p_I)$ a solution of equation (\ref{averaged1}) if for every $s\in [0,T]$ it satisfies the relation, obtained by integrating~(\ref{averaged1}).

Consider the differential equations 
\begin{equation}
\dot{v}_k=R_k(v),\quad k\geqslant 1.\label{effective1}
\end{equation}
Solutions of this system are defined similar to that of (\ref{pls1}) and (\ref{averaged1}).
Relation (\ref{averaged2}) implies:

\begin{lem}If $v(\cdot)$ satisfies (\ref{effective1}), then $I(v)$ satisfies (\ref{averaged1}).
\end{lem}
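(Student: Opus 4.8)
The plan is to verify the claim by a direct computation, differentiating $I_k(v(\tau))=\tfrac12|v_k(\tau)|^2$ along a solution of the effective equation \eqref{effective1} and recognizing the result as the right-hand side of the averaged equation \eqref{averaged1}. Since solutions of both systems are understood in the mild (integral) sense, I would first record the integrated form of \eqref{effective1}, namely $v_k(s)=v_k(0)+\int_0^s R_k(v(\tau))\,d\tau$, which shows in particular that each $v_k(\cdot)$ is $C^1$ in $\tau$ because $R_k$ is continuous along the curve $v(\cdot)\in C([0,T],h^p)$. Hence $I_k(v(\cdot))$ is $C^1$ and
\[
\frac{d}{d\tau}I_k(v(\tau))=\big(v_k(\tau),\dot v_k(\tau)\big)=\big(v_k(\tau),R_k(v(\tau))\big),
\]
using that $\tfrac{d}{d\tau}\tfrac12|v_k|^2=\mathrm{Re}\,(\bar v_k\dot v_k)=(v_k,\dot v_k)$ with the real inner product on $\mathbb{C}$.

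Next I would identify $(v_k,R_k(v))$ with $\langle(v_k,P_k)\rangle_\varphi(v)$. By the definition \eqref{effective-r} of $R_k$ and linearity of the real inner product,
\[
\big(v_k,R_k(v)\big)=\Big(v_k,\int_{\mathbb{T}}\Phi_{-\theta_k}P_k(\Phi_\theta v)\,d\theta\Big)=\int_{\mathbb{T}^\infty}\big(v_k,\Phi_{-\theta_k}P_k(\Phi_\theta v)\big)\,d\theta,
\]
where I have used that the integrand depends on $\theta$ only through the single component $\theta_k$ (the operator $\Phi_{-\theta_k}$ rotates only the $k$-th coordinate, and $P_k$ is evaluated at $\Phi_\theta v$; but after pairing with $v_k$ and integrating, only $\theta_k$ survives — more precisely, $\big(v_k,\Phi_{-\theta_k}P_k(\Phi_\theta v)\big)=\big(\Phi_{\theta_k}v_k,P_k(\Phi_\theta v)\big)=\big(e^{i\theta_k}v_k,P_k(\Phi_\theta v)\big)$ since $\Phi$ acts by unitary rotations and the real inner product is invariant under simultaneous rotation). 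This is precisely the integrand in \eqref{averaged2}, so $(v_k,R_k(v))=\langle(v_k,P_k)\rangle_\varphi(v)$. Because the right-hand side of \eqref{averaged1} is $\langle(v_k,P_k)\rangle_\varphi$, evaluated at the argument, and since $\langle(v_k,P_k)\rangle_\varphi$ depends on $v$ only through $I(v)$ (it is invariant under all rotations $\Phi_\theta$, as one checks directly from the integral, using invariance of the Haar measure), we may write it as $\langle(v_k,P_k)\rangle_\varphi(I(v))$, matching the form in which \eqref{averaged1} is stated.

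Combining the two displays gives $\frac{d}{d\tau}I_k(v(\tau))=\langle(v_k,P_k)\rangle_\varphi(I(v(\tau)))$ for every $k\geqslant1$ and every $\tau\in[0,T]$; integrating from $0$ to $s$ shows that $I(v(\cdot))$ satisfies the integrated form of \eqref{averaged1} with initial value $I(v(0))$, i.e. it is a solution in the sense defined above. I do not anticipate a serious obstacle here: the only points requiring a little care are the justification that $v_k(\cdot)$ is genuinely differentiable (so that the chain rule applies pointwise rather than only in an integrated sense) — which follows from continuity of $v(\cdot)$ in $h^p$ together with smoothness of $R_k$ on $h^p$ from the discussion preceding Assumption B — and the interchange of the real-inner-product pairing with the integral over $\mathbb{T}^\infty$, which is legitimate since the integrand is continuous and bounded in $\theta$ (by Lemma~\ref{lem-smooth}, $P_k$ is bounded on the bounded set $\{\Phi_\theta v:\theta\in\mathbb{T}^\infty\}$). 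Everything else is the bookkeeping of rotation-invariance already set up in Section~3.
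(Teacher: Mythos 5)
Your proposal is correct and follows essentially the same route as the paper, which simply observes that the identity \eqref{averaged2}, i.e.\ $(v_k,R_k(v))=\langle(v_k,P_k)\rangle_{\varphi}$, combined with $\frac{d}{d\tau}\tfrac12|v_k|^2=(v_k,\dot v_k)$, yields the lemma; you have merely filled in the routine justifications (differentiability of the mild solution, interchange of pairing and integration, rotation invariance). The only blemish is the parenthetical claim that the integrand ``depends on $\theta$ only through $\theta_k$'' --- it does not, since $P_k(\Phi_\theta v)$ involves all components of $\theta$ --- but your subsequent ``more precisely'' computation $(v_k,\Phi_{-\theta_k}P_k(\Phi_\theta v))=(e^{i\theta_k}v_k,P_k(\Phi_\theta v))$ is the correct one and matches \eqref{averaged2} exactly.
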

 
Following \cite{Kuk10}, we call equations (\ref{effective1}) the {\it effective equation} for the perturbed equation (\ref{pls1}).

\begin{proposition}The effective equation is invariant under the rotation $\Phi_{\theta}$. That is,  if $v(\tau) $ is a solution of (\ref{effective1}), then for each $\theta\in \mathbb{T}^{\infty}$,  $\Phi_{\theta}v(\tau)$  also is a solution.
        \end{proposition}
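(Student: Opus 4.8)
The plan is to reduce the statement to the equivariance of the effective vector field, i.e. to the identity $R\circ\Phi_{\theta}=\Phi_{\theta}\circ R$ on $h^p$ for every $\theta\in\mathbb{T}^{\infty}$, and then to transport this identity to (mild) solutions using that each $\Phi_{\theta}$ is a bounded linear operator (in fact an isometry of $h^p$, since it multiplies every coordinate by a factor of modulus one) and therefore commutes both with $d/d\tau$ and with integration in $\tau$.

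First I would prove the equivariance. Writing $R(v)=\int_{\mathbb{T}^{\infty}}\Phi_{-\sigma}P(\Phi_{\sigma}v)\,d\sigma$ componentwise, as in (\ref{effective-r}), (\ref{averaged2}), one has $R_k(v)=\int_{\mathbb{T}^{\infty}}e^{-i\sigma_k}P_k(\Phi_{\sigma}v)\,d\sigma$. Fix $\theta\in\mathbb{T}^{\infty}$. Using $\Phi_{\sigma}\Phi_{\theta}=\Phi_{\sigma+\theta}$ we get $R_k(\Phi_{\theta}v)=\int_{\mathbb{T}^{\infty}}e^{-i\sigma_k}P_k(\Phi_{\sigma+\theta}v)\,d\sigma$, and the substitution $\eta=\sigma+\theta$, which preserves the Haar measure $d\sigma$ on $\mathbb{T}^{\infty}$, turns this into $e^{i\theta_k}\int_{\mathbb{T}^{\infty}}e^{-i\eta_k}P_k(\Phi_{\eta}v)\,d\eta=e^{i\theta_k}R_k(v)=(\Phi_{\theta}R(v))_k$. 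Hence $R(\Phi_{\theta}v)=\Phi_{\theta}R(v)$; equivalently, $R$ is a $\Phi_{\theta}$-invariant vector field.

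Then I would conclude as follows. Let $v(\tau)$, $\tau\in[0,T']$, be a solution of (\ref{effective1}) in $h^p$, so that $v(\tau)=v(0)+\int_0^{\tau}R(v(s))\,ds$ for all $\tau$. Applying the bounded linear operator $\Phi_{\theta}$, which commutes with the Bochner integral, and then using the equivariance just established, we obtain $\Phi_{\theta}v(\tau)=\Phi_{\theta}v(0)+\int_0^{\tau}\Phi_{\theta}R(v(s))\,ds=\Phi_{\theta}v(0)+\int_0^{\tau}R(\Phi_{\theta}v(s))\,ds$. Thus $w(\tau):=\Phi_{\theta}v(\tau)$ satisfies the integral form of (\ref{effective1}) with initial datum $\Phi_{\theta}v(0)$, i.e. it is again a solution, which is the claim.

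There is essentially no deep analytic difficulty here; the only points requiring a word of justification are that the integral defining $R$, as well as the integral in the mild formulation of (\ref{effective1}), are legitimate $h^p$-valued integrals — which is guaranteed by the smoothness and local boundedness of $P$ on $h^p$ (Lemma \ref{lem-smooth}) and by the smoothness of $R$ already recorded in Section 3 — and that the change of variables used above is valid, which is immediate from the translation invariance of the Haar measure on $\mathbb{T}^{\infty}$. If one prefers to avoid the Bochner integral altogether, the same conclusion follows by differentiating $w(\tau)=\Phi_{\theta}v(\tau)$ directly and invoking uniqueness of the solution with datum $\Phi_{\theta}v(0)$.
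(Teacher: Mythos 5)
Your proposal is correct and follows essentially the same route as the paper: the whole content is the equivariance $R\circ\Phi_{\theta}=\Phi_{\theta}\circ R$, which the paper asserts directly from (\ref{effective-r}) and which you verify explicitly via translation invariance of the Haar measure before transporting it to the mild formulation. The extra care you take (working with the integral form and justifying the change of variables) only fills in details the paper leaves implicit.
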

        
        \begin{proof} \quad Applying $\Phi_{\theta}$ to (\ref{effective1}) we get that
        \[\frac{d}{d\tau}\Phi_{\theta}v=\Phi_{\theta}R(v).\]
      Relation (\ref{effective-r}) implies that operations $R$ and $\Phi_{\theta}$ commute. Therefore
        \[\frac{d}{d\tau}\Phi_{\theta}v=R(\Phi_{\theta}v).\]
        The assertion follows.   
        \end{proof}

\section{Proof of the Averaging theorem}

 In this section we prove the Theorem 0.1 by studying  the behavior of regular solutions of equation (\ref{pipls}).  We fix $p\geqslant p_d+2$,  assume $n\geqslant\max\{p,n_1(p),n_2(p)\}$  and consider 
        $u_0\in H^{p}$. 
         So \begin{equation}
       \Pi_{I,\varphi}(\Psi(u_0))=(I_0,\varphi_0)\in h_{I+}^p\times \mathbb{T}^{\infty}.
       \end{equation}
      We denote 
      \begin{equation}
      B_p(M)=\{ I\in h_{I+}^p: |I|^{\sim}_p\leqslant M\}.
      \end{equation}
       Without loss of generality, we assume $T=1$. 
                 Fix any  $M_0>0$.
      Let \[(I_0,\varphi_0)\in B_p(M_0)\times \mathbb{T}^{\infty}:=\Gamma_0,\] 
      and   let $(I(\tau),\varphi(\tau))$ be a regular solution of system (\ref{pipls}) with $(I(0),\varphi(0))=(I_0,\varphi_0)$. Then by (\ref{lemma2.3}),  there exists $M_1\geqslant M_0$  such that
       \begin{equation}I(\tau)\in B_p(M_1),\quad\tau\in [0,1].
       \end{equation}
     All constants below depend on $M_1$ (i.e. on $M_0$), and usually this dependence is not indicated.
     From the definition of the perturbation and Lemma 2.1 we know that 
           \begin{equation}
           |\mathcal{F}(I,\varphi)|^{\sim}_{p-2}\leqslant C_{M_1},\quad \forall (I,\varphi)\in B_p(M_1)\times \mathbb{T}^{\infty}.\label{boundf}
           \end{equation}

       Recall that we identify  $I^m=(I_1,\dots, I_m)$ with $(I_1,\dots,I_m,0,\dots)$, etc.
       
       Fix any $n_0\in \mathbb{N}$. By (\ref{lp_fk}),      for every  $\rho>0$, there is $m_0\in \mathbb{N}$ , depending only on $n_0$, $M_1$  and $\rho$, such that if $m\geqslant m_0$, then 
     \begin{equation}|F_k(I,\varphi)-F_k(I^{m},\varphi^{m})|\leqslant \rho,\quad \forall (I,\varphi)\in B_p(M_1)\times\mathbb{T}^{\infty},\label{dn1}
     \end{equation}
     where $k=1,\cdots, n_0$.       
     
     From now on, we always assume  that $(I,\varphi)\in B_p(M_1)\times\mathbb{T}^{\infty}$. 
     
     Since $V(x)$ is  non-resonant, then by Lemma 2.2 and Lemma 3.2, for any $\rho>0$,   there exists $ T_0=T_0(\rho,n_0)>0$, such that
     for all $\varphi\in \mathbb{T}^{\infty}$ and $T\geqslant T_0$, 
     \begin{equation}
     \Big|\frac{1}{T} \int_0^TF_k(I^{m_0},\varphi^{m_0}+\Lambda^{m_0}t)dt-\langle F_k\rangle(I^{m_0})\Big|<\rho,\label{dn}
     \end{equation}
     where $k=1,\dots, n_0$.
Due to Lemma \ref{lem-lpc}, we have 
             \begin{equation}
             \begin{split} &|G_j(I,\varphi)|\leqslant \frac{C_0(j,M_1)}{\sqrt{I_j}},\quad \text{if}\quad I_j\neq0,\\
     &|\frac{\partial F_k}{\partial I_j}(I,\varphi)|\leqslant \frac{C_0(k,j, M_1)}{\sqrt{I_j}},\quad \text{if}\quad I_j\neq0,\\
     & |\frac{\partial F_k}{\partial \varphi_j}(I,\varphi)|\leqslant C_0(k,j,M_1).\label{clumsy1}
         \end{split}
         \end{equation}
                      From  Lemma 3.1, we know
                     \begin{equation}
                            |\langle F_k\rangle(I^{m_0})-\langle F_k \rangle(\bar{I}^{m_0})| \leqslant C_1(k,m_0,M_1)|I^{m_0}-\bar{I}^{m_0}|,\label{clumsy2}
                            \end{equation}
                            and by (\ref{lp_fk}),
                            \begin{equation}
                            |F_k(I^{m_0},\varphi^{m_0})-F_k(\bar{I}^{m_0},\bar{\varphi}^{m_0})|\leqslant C_2(k,m_0,M_1)|v^{m_0}-\bar{v}^{m_0}|,\label{clumsy3}
                            \end{equation}
                            where  $\Pi_{I,\varphi}(v^{m_0})=(I^{m_0},\varphi^{m_0})$ (see (\ref{abuse-n})) and  $|\cdot|$ is the $l^{\infty}$-norm.
                            Denote
                            \[C_{M_1}^{n_0,m_0}=m_0\cdot\max\{C_0,C_1,C_2:1\leqslant j\leqslant m_0,1\leqslant k\leqslant n_0\}.\]
                     From now on we shall use the slow time $\tau=\epsilon t$. 
\begin{lem}
For $k=1,\dots,n_0$, the $I_k$-component of any regular solution of (\ref{pipls}) with initial data in $\Gamma_0$  can be written as:\[I_k(\tau)=I_k(0)+\int_0^{\tau}\langle F_k\rangle(I(s))ds +\Xi(\tau),\]
      where for any $\gamma\in(0,1)$ the function $|\Xi(\tau)|$ is bounded  on $[0,1]$ by
\begin{equation}\begin{split}|\Xi(\tau)|                & \leqslant  C_{M_1}^{n_0,m_0}\Big[\frac{T_0\epsilon}{2\gamma^{1/2}}+\frac{ T_0C_{M_1}\epsilon}{2\gamma^{1/2}} +T_0C_{M_1}\epsilon\\
&\quad+4(\gamma+T_0C_{M_1}\epsilon)^{1/2}\Big](\epsilon T_0+ 1)
            +3\rho+ 3\epsilon C_{M_1}T_0 \quad \tau\in[0, 1],
             \end{split}
             \label{main-e}
             \end{equation}
             where $\rho>0$ is arbitary and $T_0=T_0(\rho,n_0)$ is as (\ref{dn}).
             \end{lem}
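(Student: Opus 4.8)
The plan is to prove Lemma 4.1 by the classical averaging scheme of splitting the time interval $[0,1]$ (in slow time) into small subintervals, on each of which the actions are nearly frozen while the angles run over the torus densely enough to see their average. First I would fix a small partition parameter and set $L=\lceil 1/(\epsilon T_0)\rceil$, so that $[0,1]$ is divided into $L$ subintervals of slow length $\epsilon T_0$ (equivalently, length $T_0$ in fast time). On each subinterval $[\tau_l,\tau_{l+1}]$ I would write
\[I_k(\tau_{l+1})-I_k(\tau_l)=\int_{\tau_l}^{\tau_{l+1}}F_k(I(s),\varphi(s))\,ds,\]
and compare this successively to: (a) the same integral with $F_k$ replaced by its truncation $F_k(I^{m_0},\varphi^{m_0})$ using (\ref{dn1}), paying $\rho\cdot\epsilon T_0$; (b) the integral in which $I^{m_0}(s)$ is frozen at $I^{m_0}(\tau_l)$ and $\varphi^{m_0}(s)$ is replaced by the linear flow $\varphi^{m_0}(\tau_l)+\Lambda^{m_0}(s-\tau_l)/\epsilon$ of the unperturbed angle dynamics; (c) finally the frozen-action average $\langle F_k\rangle(I^{m_0}(\tau_l))\cdot\epsilon T_0$ via the Weyl-type estimate (\ref{dn}).

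The only delicate comparison is (b), the replacement of the true angle trajectory by the linear one, because $\dot\varphi_j=\epsilon^{-1}\lambda_j+G_j$ and $G_j$ blows up like $I_j^{-1/2}$ near $I_j=0$. The standard device here is to introduce a cutoff level $\gamma$: for indices $j\le m_0$ with $I_j(\tau_l)\ge\gamma$ one controls $|\varphi_j(s)-\varphi_j(\tau_l)-\epsilon^{-1}\lambda_j(s-\tau_l)|$ by $\epsilon^{-1}$-free bounds using $|G_j|\le C_0\gamma^{-1/2}$ and the Lipschitz bound (\ref{clumsy3}) on $F_k$ in the angle variables, together with the drift of $I^{m_0}$ itself, which by (\ref{boundf}) is $O(\epsilon T_0)$ over a subinterval; this produces the terms with $T_0\epsilon\gamma^{-1/2}$ and $T_0C_{M_1}\epsilon\gamma^{-1/2}$. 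For indices with $I_j(\tau_l)<\gamma$, one instead argues that $I_j$ cannot become large: since $\dot I_j$ is bounded and — more importantly — one can estimate the time $I_j$ spends below, say, $2\gamma+T_0C_{M_1}\epsilon$, so that the contribution of the corresponding "bad" directions to $F_k$ (which is continuous in $I^{m_0}$ on the compact $B_p(M_1)$, uniformly in $\varphi$, by Lemma \ref{lem-lpc}(iv)) is $O((\gamma+T_0C_{M_1}\epsilon)^{1/2})$; this is the source of the $4(\gamma+T_0C_{M_1}\epsilon)^{1/2}$ term. Here one uses that $|I_j^{1/2}(s)-I_j^{1/2}(\tau_l)|$ is small, a trick that converts the $I_j^{-1/2}$ singularity in $G_j$ into a harmless $I_j^{1/2}$ factor after integration.

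Summing the per-subinterval errors over the $L\sim(\epsilon T_0)^{-1}$ subintervals multiplies each $O(\epsilon T_0)$-type error by $(\epsilon T_0)^{-1}$, which explains why the bracket in (\ref{main-e}) is multiplied by $(\epsilon T_0+1)$ rather than by $\epsilon T_0$ (one keeps the cruder form $\epsilon T_0+1\ge L^{-1}\cdot L$ to absorb the last partial subinterval cleanly), while the terms $3\rho$ and $3\epsilon C_{M_1}T_0$ collect, respectively, the truncation error accumulated over all subintervals and the endpoint/last-interval correction. Finally, comparing $\sum_l\langle F_k\rangle(I^{m_0}(\tau_l))\epsilon T_0$ with $\int_0^\tau\langle F_k\rangle(I(s))\,ds$ uses the Lipschitz property (\ref{clumsy2}) of $\langle F_k\rangle$ in the first $m_0$ variables plus (\ref{dn1}) again to pass from $I^{m_0}$ back to $I$, contributing further $O(\epsilon T_0)$-per-interval terms already absorbed in the stated bound. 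The main obstacle, as indicated, is the bookkeeping around the angle singularity: one must choose the order of estimates so that the $\epsilon^{-1}$ in the fast angle rotation only ever appears inside $\epsilon^{-1}\lambda_j\cdot(s-\tau_l)$, i.e. paired with the subinterval length $T_0$ in fast time, and never multiplies a genuine error — everything else is routine Gronwall-free summation.
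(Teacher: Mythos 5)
Your proposal follows essentially the same route as the paper: the same partition of $[0,1]$ into subintervals of slow length $\epsilon T_0$, the same chain of comparisons (truncation to $m_0$ modes via (\ref{dn1}), freezing the actions at the left endpoint, replacing the true angles by the linear flow, applying the Weyl-type estimate (\ref{dn}), then unfreezing and un-truncating the average via (\ref{clumsy2})), and the same $\gamma$-cutoff device in which the singular $I_j^{-1/2}$ bound on $G_j$ is used only for indices staying above $\gamma$, while the contribution of small actions is controlled through the H\"older-type bound $|v^{m_0}-\kappa(v^{m_0})|=\sqrt{2}\,|I^{m_0}-\kappa(I^{m_0})|^{1/2}$, yielding exactly the $(\gamma+T_0C_{M_1}\epsilon)^{1/2}$ terms. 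The one point to tighten is that the good/bad classification of an index must be made over the whole subinterval (the paper places $l$ in $\Omega(i)$ iff $I_l(t)<\gamma$ for \emph{some} $t\in[a_i,a_{i+1}]$, so that on the complement $I_l\geqslant\gamma$ throughout), not only at the left endpoint, since otherwise the bound $|G_j|\leqslant C_0\gamma^{-1/2}$ could fail partway through the subinterval.
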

             \begin{proof}\quad Let us divide the time interval $[0,\tau]$, $\tau\leqslant 1$, into subinterval $[a_i,a_{i+1}]$, $0\leqslant i\leqslant d_0$, such that 
\[a_0=0,a_{d_0}=\tau,\quad a_{d_0}-a_{d_0-1}\leqslant \epsilon T_0,\]
and  $a_{i+1}-a_i=\epsilon T_0$,  for $0\leqslant  i\leqslant d_0-2$. Then $d_0\leqslant (T_0\epsilon)^{-1}+1$.
For  each interval $[a_i,a_{i+1}]$ we define a subset $\Omega(i)\subset\{1,2,\cdots,m_0\}$ in the following way:
      \[l\in \Omega(i) \quad \Longleftrightarrow\quad\exists t\in [a_i,a_{i+1}],\quad I_l(t)<\gamma.\]
      Then if $l\in \Omega(i)$, by (\ref{boundf}) we have \[|I_l(t)|<T_0C_{M_1}\epsilon +\gamma, \quad t\in [a_i,a_{i+1}].\]
     For $I=(I_1,I_2,\cdots)$ and $\varphi=(\varphi_1,\varphi_2,\cdots)$ we set
      \[\kappa_i(I)=\hat{I},\quad\kappa_i(\varphi)=\hat{\varphi},
      \]
      where the vectors $\hat{I}$ and $\hat{\varphi}$ are defined as  follows:
      \[\text{If}\quad l\in \Omega(i),\quad\text{then}\quad \hat{I}_l=0,\hat{\varphi}_l=0,\quad \text{else}\quad \hat{I}_l=I_l,\;\hat{\varphi}_l=\varphi_l.\]
      We abbreviate  $\kappa_i(I,\varphi)=(\kappa_i(I),\kappa_i(\varphi))$.
      
      Below, $k=1,\dots, n_0$. 
      
      Then on $[a_i,a_{i+1}]$, noting $|v^{m_0}-\kappa_i(v^{m_0}|=\sqrt{2}|I^{m_0}-\kappa_i(I^{m_0})|^{1/2}$, and using (\ref{clumsy3}) we have 
      \begin{equation}
      \begin{split}
       &\int_{a_i}^{a_{i+1}}\Big|F_k\Big(I^{m_0}(s),\varphi^{m_0}(s)\Big)-F_k\Big(\kappa_i\big(I^{m_0}(s),\varphi^{m_0}(s)\big)\Big)\Big|ds\\
       &\leqslant\int_{a_i}^{a_{i+1}}C^{n_0,m_0}_{M_1}\sqrt{2}\Big|I^{m_0}(s)-\kappa_i\Big(I^{m_0}(s)\Big)\Big|^{1/2}ds\\
       &\leqslant \epsilon\sqrt{2} T_0C_{M_1}^{n_0,m_0}(\gamma +T_0C_{M_1}\epsilon)^{1/2}. \label{p0}
       \end{split}
      \end{equation}
By (\ref{dn1}), we have 
\begin{equation}\int_0^{\tau}F_k(I(s),\varphi(s))ds=\int_0^{\tau} F_k(I^{m_0}(s),\varphi^{m_0}(s))ds + \xi_1(\tau),\label{xi1}
\end{equation}
      where $ |\xi_1(\tau)|\leqslant \rho \tau$.
      \smallskip

      \noindent{\bf Proposition 1. }\[\int_0^{\tau} F_k\Big(I^{m_0}(s),\varphi^{m_0}(s)\Big)ds =\sum_{i=0}^{d_0}\int_{a_i}^{a_{i+1}}F_k\Big(I^{m_0}(a_i),\varphi^{m_0}(s)\Big)ds +\xi_2(\tau),\]
        where 
      \begin{equation}|\xi_2|\leqslant \frac{1}{2}C^{n_0,m_0}_{M_1}\Big[4\sqrt{2}(\gamma+T_0C_{M_1}\epsilon)^{1/2}+\gamma^{-1/2}T_0C_{M_1}\epsilon\Big](\epsilon T_0+1).\label{p1}
      \end{equation}
      
     \begin{proof} \quad We may write $\xi_2(t)$ as 
    \[\xi_2(t) =\sum_{i=0}^{d_0-1}\int_{a_i}^{a_{i+1}}\Big[F_k\Big(I^{m_0}(s),\varphi^{m_0}(s)\Big)-F_k\Big(I^{m_0}(a_i),\varphi^{m_0}(s)\Big)\Big]ds\\
    :=\sum_{i=0}^{d_0-1} \tilde{I}_i.\]
     For each $i$, by (\ref{boundf}) and (\ref{clumsy1})  we have
     \begin{equation}\begin{split}&\int_{a_i}^{a_{i+1}}|F_k(\kappa_i(I^{m_0}(s)),\varphi^{m_0}(s))-F_k(\kappa_i(I^{m_0}(a_i),\varphi^{m_0}(s))|ds\\
     &\leqslant \int_{a_i}^{a_{i+1}}\gamma^{-1/2}C_{M_1}^{n_0,m_0}|\kappa_i(I^{m_0}(s)-I^{m_0}(a_i))|ds\\
     &\leqslant \frac{1}{2}C_{M_1}^{n_0,m_0}C_{M_1}T_0^2\gamma^{-1/2}\epsilon^2.
     \end{split}\label{p1.1}
     \end{equation}
     Replacing  the integrand $F_k(I^{m_0},\varphi^{m_0})$ by $F_k(\kappa_i(I^{m_0},\varphi^{m_0}))$, using (\ref{p0}) and (\ref{p1.1}), we have 
     \[\tilde{I}_i\leqslant \frac{1}{2}C^{n_0,m_0}_{M_1}[4\sqrt{2}\epsilon T_0(\gamma+T_0C_{M_1}\epsilon)^{1/2}+\gamma^{-1/2}T_0^2C_{M_1}\epsilon^2].
     \]
       The inequality (\ref{p1}) follows.
       \end{proof}
    

    On each subsegment $[a_i,a_{i+1}]$, we now consider  the unperturbed linear dynamics  $\tilde{\varphi}_i(\tau) $ of the angles $\varphi^{m_0}\in \mathbb{T}^{m_0}$ :
    \[\tilde{\varphi}_i(\tau)=\varphi^{m_0}(a_i)+\epsilon^{-1}\Lambda^{m_0}(\tau-a_i)\in\mathbb{T}^{m_0},\quad \tau\in [a_i,a_{i+1}].\]
    \smallskip
    
   \noindent {\bf Proposition 2.}\[\int_0^{\tau} F_k\Big(I^{m_0}(a_i),\varphi^{m_0}(s)\Big)ds=\sum_{i=0}^{d_0-1}\int_{a_i}^{a_{i+1}}F_k\Big(I^{m_0}(a_i),\tilde{\varphi}_i(s)\Big)ds +\xi_3(\tau),\]
    where
     \begin{equation}
     |\xi_3(\tau)|\leqslant [2\sqrt{2}C_{M_1}^{n_0,m_0}(\gamma+T_0C_{M_1}\epsilon)^{1/2}+\frac{T_0\epsilon}{2\gamma}(C^{n_0,m_0}_{M_1})^2](1+\epsilon T_0).\label{p2}
    \end{equation}
    
 \begin{proof}\quad  On each $[a_i,a_{i+1}]$, 
 notice that 
 \[\begin{split}       &\int_{a_i}^{a_{i+1}}\Big|\kappa_i\Big(\varphi^{m_0}(s)-\tilde{\varphi}_i(s)\Big)\Big|ds
 \leqslant \int_{a_i}^{a_{i+1}}\int_{a_i}^{s}\Big| \kappa_i\Big(\epsilon G^{m_0}(I(s^{\prime}),\varphi(s^{\prime}))\Big)\Big|ds^{\prime}ds\\
    &\leqslant \int_{a_i}^{a_{i+1}}\int_{a_i}^{s}C_{M_1}^{n_0,m_0}\epsilon \gamma^{-1/2}ds^{\prime} ds\leqslant \frac{T_0^2\epsilon^2}{2\gamma^{1/2}}C_{M_1}^{n_0,m_0} .
    \end{split}\]
    Here the first inequality comes from equation (\ref{pipls}),  and using (\ref{clumsy1}) we can get the second inequality.     
    Therefore, using again (\ref{clumsy1}), we have
 \[\begin{split}&\int_{a_i}^{a_{i+1}}\Big[F_k\Big(\kappa_i\big(I^{m_0}(a_i),\varphi^{m_0}(s)\big)\Big)-F\Big(\kappa_i\big(I^{m_0}(a_i),\tilde{\varphi}_i(s)\big)\Big)\Big]ds\\   
       &\leqslant \int_{a_i}^{a_{i+1}}C_{M_1}^{n_0,m_0}\Big|\kappa_i\Big(\varphi^{m_0}(s)-\tilde{\varphi}_i(s)\Big)\Big|ds\\
   & \leqslant \frac{T_0^2\epsilon^2}{2\gamma^{1/2}} (C_{M_1}^{n_0,m_0})^2    \end{split}\]
Therefore (\ref{p2}) holds for the same reason as (\ref{p1}).
\end{proof}
      
      We will now compare the integrals $\int_{a_i}^{a_{i+1}}F_k(I^{m_0}(a_i),\tilde{\varphi}_i(s))ds$ with the average values $\langle F_k(I^{m_0}(a_i))\rangle\epsilon T_0$.
      \smallskip

     \noindent{\bf Propositon 3.}\[\sum_{i=0}^{d_0-1}\int_{a_i}^{a_{i+1}}F_k\Big(I^{m_0}(a_i),\tilde{\varphi}_i(s)\Big)ds=\sum_{i=1}^{d_0-1}T_0\langle F_k\rangle\Big(I^{m_0}(a_i)\Big)+\xi_4(\tau),\]
      
      where
      \begin{equation}|\xi_4(\tau)|\leqslant \rho+2C_{M_1}\epsilon T_0.\label{p3}
      \end{equation}
      
      \begin{proof} \quad     For $0\leqslant i\leqslant d_0-2$,  by (\ref{dn})

      \[\Big|\int_{a_i}^{a_{i+1}}\Big[F_k\Big(I^{m_0}(a_i),\tilde{\varphi}_i(s)\Big)-\langle F_k\rangle\Big(I^{m_0}(a_i)\Big)\Big]ds\Big|\leqslant \epsilon\rho T_0.\]
      So\[\sum_{i=0}^{d_0-2}\Big|\int_{a_i}^{a_{i+1}}F_k\Big(I^{m_0}(a_i),\tilde{\varphi}_i(s)\Big)ds-\langle F_k\rangle\Big(I^{m_0}(a_i)\Big)T_0\Big|
      \leqslant (d_0-1)\epsilon\rho T_0. \]
      
          Moreover,
        \[\Big|\int_{a_{d_0-1}}^{\tau}\Big[F_k\Big(I^{m_0}(a_i),\tilde{\varphi}_i(s)\Big)-\langle F_k\rangle\Big(I^{m_0}(a_i)\Big)\Big]ds\Big|\leqslant 2 C_{M_1}\epsilon T_0.\]

      This implies the inequality (\ref{p3}).
      \end{proof}
  

      \noindent{\bf Proposition 4.}
      \[\sum_{i=1}^{d_0-1}(a_{i+1}-a_i)\langle F_k\rangle\Big(I^{m_0}(a_i)\Big)=\int_0^{\tau}\langle F_k\rangle\Big(I^{m_0}(s)\Big)ds +\xi_5(\tau),\]
      where\begin{equation}
      |\xi_5(\tau)|\leqslant\epsilon C_{M_1}C_{M_1}^{n_0,m_0}T_0(\epsilon T_0+1).\label{p4}
      \end{equation}  
         
     \begin{proof}\quad  Indeed, as \[|\xi_5(\tau)|=\Big|\int_0^{\tau}\Big[\langle F_k\rangle\Big(I^{m_0}(s)\Big)ds-\sum_{i=1}^{d_0-1}(a_{i+1}-a_i)\langle F_k\rangle\Big(I^{m_0}(a_i)\Big)\Big|,\]
      
    then  using  (\ref{boundf}) and (\ref{clumsy2}) we get
     \[\begin{split}|\xi_5(t)|&\leqslant \sum_{i=0}^{d_0-1}\int_{s(i,j)}C_{M_1}^{n_0,m_0}|I^{m_0}(s)-I^{m_0}(a_i)|ds \\
     &\leqslant\epsilon^2 \sum_{i=0}^{d_0-1}C_{M_1}C_{M_1}^{n_0,m_0}(T_0)^2\leqslant \epsilon C_{M_1}C_{M_1}^{n_0,m_0}T_0(\epsilon T_0+1).
     \end{split}\]
     \end{proof}
    
    Finally, we have obvious 
   \smallskip 
   
       \noindent{\bf Proposition 5.} \[\int_0^\tau\langle F_k\rangle\Big(I^{m_0}(s)\Big)ds=\int_0^{\tau}\langle F_k\rangle\Big(I(s)\Big)ds +\xi_6(\tau),\]  
      
      and $|\xi_6(\tau)|$ is bounded by $\rho \tau$.

      Gathering the estimates in Propositions 1-5,  we obtain
       \[ I_k(\tau)=I_k(0)+ \int_0^{\tau} F_k\Big(I(s),\varphi(s)\Big)ds
       =I_k(0)+  \int_0^{\tau} \langle F_k\rangle\Big(I(s)\Big)ds +\Xi(\tau),
       \]
      where $\Xi(\tau)|\leqslant  \sum_{i=1}^6|\xi_i(\tau)|$ satisfies (\ref{main-e}).      
      Lemma 4.1 is proved.
      \end{proof}

      \begin{corollary} For any $\bar{\rho}>0$, with  a suitable choice of  $\rho$, $\gamma$ and  $T_0$, the function $|\Xi(t)|$ in Lemma 4.1 can be made less than $\bar{\rho}$, if $\epsilon$ is small enough.
      \end{corollary}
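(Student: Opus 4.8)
The plan is to exploit the order in which the parameters are selected inside Lemma 4.1. Recall that $n_0$ is already fixed; that for a given $\rho>0$ the integer $m_0=m_0(n_0,M_1,\rho)$ and the constant $C_{M_1}^{n_0,m_0}$ depend only on $n_0,m_0,M_1$, hence \emph{not} on $\gamma$ or $\epsilon$; and that $T_0=T_0(\rho,n_0)$ is likewise independent of $\gamma$ and $\epsilon$. Given $\bar\rho>0$, I would split the right-hand side of (\ref{main-e}) into three groups and arrange that each one is at most $\bar\rho/4$.

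First I would dispose of the term $3\rho$ by fixing $\rho\leqslant\bar\rho/12$; this simultaneously fixes $m_0$, the constants $C_{M_1}$ and $C_{M_1}^{n_0,m_0}$, and $T_0$. Next, using $(\gamma+x)^{1/2}\leqslant\gamma^{1/2}+x^{1/2}$, I would rewrite $C_{M_1}^{n_0,m_0}\cdot 4(\gamma+T_0C_{M_1}\epsilon)^{1/2}(\epsilon T_0+1)$ as the sum of $4C_{M_1}^{n_0,m_0}\gamma^{1/2}(\epsilon T_0+1)$ and a term of order $\epsilon^{1/2}$; for $\epsilon\leqslant T_0^{-1}$ the first summand is at most $8C_{M_1}^{n_0,m_0}\gamma^{1/2}$, so choosing $\gamma$ small enough (depending only on $\bar\rho$ and the constants already frozen) makes it $\leqslant\bar\rho/4$, and this fixes $\gamma$. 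Finally, with $\rho,\gamma,T_0$ and all constants now frozen, every remaining term in (\ref{main-e}) — the contributions of $\frac{T_0\epsilon}{2\gamma^{1/2}}$, $\frac{T_0C_{M_1}\epsilon}{2\gamma^{1/2}}$, $T_0C_{M_1}\epsilon$ and $4(T_0C_{M_1}\epsilon)^{1/2}$, each multiplied by $C_{M_1}^{n_0,m_0}(\epsilon T_0+1)$, together with $3\epsilon C_{M_1}T_0$ — is a fixed positive power of $\epsilon$ (the exponents occurring are $\tfrac12,1,\tfrac32,2$) with a fixed coefficient. Hence there is $\epsilon_0>0$ such that their total is $\leqslant\bar\rho/4$ whenever $0<\epsilon\leqslant\epsilon_0$. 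Adding the three bounds gives $|\Xi(\tau)|\leqslant 3\bar\rho/4<\bar\rho$ for all $\tau\in[0,1]$ and all $\epsilon\leqslant\min\{\epsilon_0,T_0^{-1}\}$.

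The only point that needs care — and the reason the corollary is phrased as ``suitable choice of $\rho,\gamma,T_0$'' followed by ``$\epsilon$ small enough'' — is the apparent tension between the summand $4C_{M_1}^{n_0,m_0}\gamma^{1/2}(\epsilon T_0+1)$, which forces $\gamma$ small, and the summands $\tfrac{T_0\epsilon}{2\gamma^{1/2}}$, which grow as $\gamma\to0$. This is resolved simply by respecting the dependence graph $\rho\to(m_0,T_0,C_{M_1}^{n_0,m_0})\to\gamma\to\epsilon$: one freezes $\gamma$ first and only afterwards sends $\epsilon\to0$, so that $\gamma$ is a constant when the $\epsilon/\gamma^{1/2}$ terms are made negligible. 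Beyond this ordering of the quantifiers I expect no genuine obstacle; the estimate is otherwise completely elementary.
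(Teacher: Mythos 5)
Your argument is correct, and it reaches the same conclusion as the paper by a slightly different parameter selection. The paper's proof makes $\gamma$ and $T_0$ explicit powers of $\epsilon$ --- it sets $\gamma=\epsilon^{\alpha}$, $T_0=\epsilon^{-\sigma}$, $\rho=\bar{\rho}/9$ with $1-\alpha/2-\sigma>0$ and $0<\sigma<1$, so that every term of (\ref{main-e}) except $3\rho$ carries a positive power of $\epsilon$ --- whereas you keep $\rho$, $T_0=T_0(\rho,n_0)$ and $\gamma$ as fixed, $\epsilon$-independent constants chosen in the order $\rho\to T_0\to\gamma$ and only then let $\epsilon\to0$. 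Both are valid: the Weyl-type Lemma 3.2 supplies a fixed $T_0(\rho,n_0)$ that works for all $T\geqslant T_0$, so nothing forces $T_0$ to grow as $\epsilon\to0$, and with $T_0,\gamma$ frozen the surviving terms are indeed $O(\epsilon^{1/2})$. What your version buys is that it makes the quantifier ordering (the ``dependence graph'' $\rho\to T_0\to\gamma\to\epsilon$) explicit, which is exactly the point the corollary's phrasing is hiding; what the paper's power-law choice buys is a one-line verification and a form that would still close if $T_0$ had to diverge with $\rho$ fixed. Your correct observation that the competing terms $\gamma^{1/2}$ and $\epsilon\gamma^{-1/2}$ must be handled by fixing $\gamma$ before shrinking $\epsilon$ is precisely what the paper's constraint $1-\alpha/2-\sigma>0$ encodes.
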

      
      \begin{proof}\quad
      We choose
      $\gamma=\epsilon^{\alpha}, T_0=\epsilon^{-\sigma}, \rho=\frac{\bar{\rho}}{9}$
      with\[1-\alpha/2 -\sigma>0, \;0<\sigma<1.\]Then for $\epsilon$ small enough, we have $|\Xi(t)|<\bar{\rho}$.
      \end{proof}

 For any $(I_0,\varphi_0)\in \Gamma_0$, let  the curve $(I^{\epsilon}(\tau), \varphi^{\epsilon}(\tau))\in h^p_I\times\mathbb{T}^{\infty}$, $\tau\in[0,1]$, be a regular solution of the equation (\ref{slowi}) such that $(I^{\epsilon}(0),\varphi^{\epsilon}(0))=(I_0,\varphi_0)$.

\begin{lem} The family of  curves  $\{I^{\epsilon}(\tau),\tau\in[0,1]\}_{0<\epsilon<1}$ is pre-compact in $C([0,1],h^{p-2}_I)$. Moreover every limiting (as $\epsilon\to0$)  curve  $I^0(\tau)$, $\tau\in[0,1]$ is a solution of  the averaged equation (\ref{averaged1}),  satisfying 
\[|I^0(\tau)|^{\sim}_p\leqslant M_1,\quad \tau\in[0,1].\]
\end{lem}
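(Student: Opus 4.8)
The plan is to prove precompactness via the Arzelà--Ascoli theorem and then identify the limit using Lemma~4.1. For precompactness in $C([0,1],h^{p-2}_I)$, I would argue in two stages. First, for each fixed $k$, the curve $I_k^\epsilon(\tau)$ is uniformly bounded (by $M_1$, via (\ref{lemma2.3})) and equi-Lipschitz: from $\dot I_k=F_k(v)=(v_k,P_k(v))$ and the bound (\ref{boundf}) on $|\mathcal{F}|^{\sim}_{p-2}$, one gets $|\dot I_k^\epsilon(\tau)|\le \lambda_k^{-(p-2)/2}C_{M_1}$ uniformly in $\epsilon$, so each coordinate family is precompact in $C([0,1])$ by Arzelà--Ascoli. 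Second, to pass from coordinate-wise convergence to convergence in the $h^{p-2}_I$-norm, I would use the uniform tail estimate coming from the $h^p$-bound: $\sum_{k> N}\lambda_k^{p-2}|I_k^\epsilon(\tau)|\le \lambda_N^{-2}\sum_{k>N}\lambda_k^{p}|I_k^\epsilon(\tau)|\le \lambda_N^{-2}M_1\to 0$ uniformly in $\tau$ and $\epsilon$. Combining a diagonal subsequence (on which every $I_k^\epsilon$ converges in $C([0,1])$) with this uniform tail control gives a subsequence converging in $C([0,1],h^{p-2}_I)$; hence the whole family is precompact there. One should also record that any limit $I^0$ inherits the bound $|I^0(\tau)|^{\sim}_p\le M_1$, since $B_p(M_1)$ is weakly closed and the $h^p$-norms are uniformly bounded: for each finite $N$, $\sum_{k\le N}\lambda_k^p|I_k^0(\tau)| = \lim_\epsilon \sum_{k\le N}\lambda_k^p|I_k^\epsilon(\tau)| \le M_1$, and letting $N\to\infty$ gives the claim.

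Next I would identify the limiting curves as solutions of the averaged equation (\ref{averaged1}). Fix $n_0$ and $k\le n_0$. Along a convergent subsequence $I^{\epsilon_j}\to I^0$ in $C([0,1],h^{p-2}_I)$, apply Lemma~4.1:
\[
I_k^{\epsilon_j}(\tau)=I_k^{\epsilon_j}(0)+\int_0^\tau \langle F_k\rangle(I^{\epsilon_j}(s))\,ds+\Xi_{\epsilon_j}(\tau),
\]
where by Corollary~4.2 the remainder $\sup_\tau|\Xi_{\epsilon_j}(\tau)|\to 0$ as $\epsilon_j\to 0$ (choosing $\gamma=\epsilon^\alpha$, $T_0=\epsilon^{-\sigma}$ with $1-\alpha/2-\sigma>0$, $0<\sigma<1$). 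Since $I_k^{\epsilon_j}(0)=I_{0,k}$ is fixed and $I_k^{\epsilon_j}(\tau)\to I_k^0(\tau)$, it remains to pass to the limit in the integral term. Here I use that $\langle F_k\rangle$ is, by Lemma~3.1(ii), a function of only finitely many components of $I$ (in fact continuous, indeed locally Lipschitz in $I^{m_0}$ on $B_p(M_1)$, by Lemma~3.1(i)), so $\langle F_k\rangle(I^{\epsilon_j}(s))\to\langle F_k\rangle(I^0(s))$ uniformly on $[0,1]$ by the $C([0,1],h^{p-2}_I)$-convergence (coordinate-wise convergence suffices since only finitely many coordinates enter). Dominated convergence then yields
\[
I_k^0(\tau)=I_{0,k}+\int_0^\tau \langle F_k\rangle(I^0(s))\,ds,\qquad \tau\in[0,1],
\]
for every $k$. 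Since this is exactly the integrated form of (\ref{averaged1}) and $I^0\in C([0,1],h^p_I)$ by the bound above, $I^0$ is a solution of the averaged equation.

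The main obstacle, I expect, is the interchange of limits in the integral term when $\langle F_k\rangle$ depends on a coordinate $I_j$ through which $F_k$ is only Hölder-$\tfrac12$ (the $\sqrt{I_j}$-singularities in (\ref{clumsy1})): one must be careful that $\langle F_k\rangle$ itself, after averaging, is genuinely Lipschitz in $I^{m_0}$ — this is precisely what Lemma~3.1 buys us (the averaged function is smooth in $(I_1,\dots,I_M)$ via the evenness-in-$\sqrt{I_j}$ and Whitney argument), so the singularities are averaged away and the convergence $\langle F_k\rangle(I^{\epsilon_j})\to\langle F_k\rangle(I^0)$ is unproblematic once this is invoked. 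A secondary technical point is making the precompactness argument uniform over the countably many coordinates simultaneously; the uniform tail bound $\lambda_N^{-2}M_1$ handles this cleanly, so no single step is hard, but the bookkeeping of the diagonal extraction plus tail estimate is where care is needed.
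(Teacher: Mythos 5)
Your proposal is correct and follows essentially the same route as the paper: uniform bounds on $|I^{\epsilon}(\tau)|^{\sim}_p$ and on $|\tfrac{d}{d\tau}I^{\epsilon}|^{\sim}_{p-2}$ give precompactness in $C([0,1],h^{p-2}_I)$ via Arzel\`a--Ascoli (your coordinate-wise extraction plus the $\lambda_N^{-2}M_1$ tail bound is just the explicit verification of the compact embedding the paper uses implicitly), and the limit is identified through Lemma~4.1 and Corollary~4.2 exactly as in the text. Your extra care about the continuity of $\langle F_k\rangle$ needed to pass to the limit in the integral (via Lemma~3.1) fills in a point the paper leaves implicit; the only slight misstatement is that $\langle F_k\rangle$ depends on \emph{all} components of $I$ (Lemma~3.1(ii) asserts smoothness in the first $M$ components for each $M$, with the tail controlled by (\ref{lip})), but your Lipschitz/continuity argument on $B_p(M_1)$ covers this anyway.
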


\begin{proof}\quad Due to (\ref{lemma2.3}) and (\ref{boundf}), we know that  for any $\epsilon\in(0,1)$,
\[|I^{\epsilon}(\tau)|^{\sim}_p\leqslant M_1,\quad|\frac{d}{d\tau}I(\tau)|^{\sim}_{p-2}\leqslant C_{M_1},\quad \tau\in[0,1].\]
Then by the Arzel\`a-Ascoli theorem, we have that the set 
$\mathcal{I}:=\{I^{\epsilon}(\tau),\tau\in[0,1]\}_{0<\epsilon<1} $ is pre-compact in $C([0,1],h^{p-2}_I)$.
Let $\{\rho_m\}_{m\in\mathbb{N}}$ be a sequence such that 
 $\rho_m\searrow 0$.
From Lemma 4.1 and Corollary 4.2, there is $\epsilon_m>0$ such that if $\epsilon\leqslant \epsilon_m$, then for $k=1,\dots,m$, we have
 \begin{equation}
 \begin{split}
&I^{\epsilon}_k(\tau)=I^{\epsilon}_k(0)+\int_0^\tau \langle F_k\rangle(I^{\epsilon}(s))ds+\Xi_k(\tau),\\
& |\Xi_k(\tau)|\leqslant \rho_m,\quad\tau\in[0,1].
\end{split}\label{can1}
\end{equation}
Let $I^0=I^0(\tau)$, $\tau\in[0,1]$ be a limiting curve of the set $\mathcal{I}$ as $\epsilon\to0$. Then we have 
\[I^0\in C([0,1],h^{p-2}_I)\quad\text{and}\quad |I^0(\tau)|^{\sim}_p\leqslant M_1,\quad\tau\in[0,1].\]
By (\ref{can1}),  the curve $I^0(\cdot)$ solves the averaged equation (\ref{averaged1}).
\end{proof}

   For any $\theta\in\mathbb{T}^{\infty}$ and any vector $I\in h_{I+}^p$ we set 
      \[  V_{\theta}(I)=(V_{\theta_1}(I_1),V_{\theta_2}(I_2),\dots),\]
      where
      $\theta=(\theta_1,\theta_2,\dots)$ and  $V_{\theta_j}(I_j)=\sqrt{2I_j}\cos(\theta_j)+i\sqrt{2I_j}\sin(\theta_j)$, for every $ j\geqslant1$. Then $\varphi_j(V_{\theta_j})\equiv\theta_j$, and for each $\theta\in \mathbb{T}^{\infty}$  the map $I\rightarrow V_{\theta}(I)$ is a right inverse of the map $v\rightarrow I(v)$. For any vector $I$ we denote 
      \[I^{>N}=(I_{N+1},I_{N+2},\dots),\quad V^{>N}_{\theta}(I)=(V_{\theta_{N+1}}(I_{N+1}),V_{\theta_{N+2}}(I_{N+2}),\dots).\]

            \begin{lem} (Lifting) Let $I^0(\tau)=(I_k^0(\tau), k\geqslant 1)\in h^p_{I+}$, $\tau\in[0,1]$,  be a solution of the averaged equation (\ref{averaged1}), constructed in Lemma 4.3.  Then, for any $\theta\in \mathbb{T}^{\infty}$, there is a  solution $v(\cdot)$ of the effective equation (\ref{effective1}) such that 
            \begin{equation}
            I(v(\tau))=I^0(\tau),\quad \tau\in [0,1],\quad \text{and} \quad v(0)=V_{\theta}(I^0(0)).\label{3.12}
            \end{equation}
            \end{lem}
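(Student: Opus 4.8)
The plan is to exhibit $I^0(\cdot)$ as the actions of a limit, as $\epsilon\to0$, of the solutions of (\ref{vpls}) written in the ``interaction picture'', and then to identify that limit as a solution of the effective equation (\ref{effective1}). First, by Proposition 3.4 and since the rotations $\Phi_{\cdot}$ preserve actions, it suffices to treat $\theta=\varphi_0:=\varphi(\Psi(u_0))$: if $v(\cdot)$ solves (\ref{effective1}) with $v(0)=\Psi(u_0)$ and $I(v(\tau))=I^0(\tau)$, then $\Phi_{\theta-\varphi_0}v(\cdot)$ is again such a solution, starting at $\Phi_{\theta-\varphi_0}\Psi(u_0)=V_\theta(I^0(0))$ (recall $\Psi(u_0)=V_{\varphi_0}(I^0(0))$ and $I^0(0)=I(\Psi(u_0))$). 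Recall also from Lemma 4.3 that $I^0$ is the limit in $C([0,1],h^{p-2}_I)$ of $I^{\epsilon_n}=I(v^{\epsilon_n})$ along some $\epsilon_n\downarrow0$, where $v^\epsilon(\tau)=\Psi(u^\epsilon(\epsilon^{-1}\tau))$ solves (\ref{vpls}).

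Put $\Lambda=(\lambda_k)_{k\geqslant1}$ and $w^\epsilon(\tau)=\Phi_{\epsilon^{-1}\Lambda\tau}(v^\epsilon(\tau))$. Then (\ref{pvls}) transforms into the non-singular equation $\dot w^\epsilon=\Phi_{\epsilon^{-1}\Lambda\tau}P(\Phi_{-\epsilon^{-1}\Lambda\tau}w^\epsilon)$, i.e.
\[w^\epsilon(\tau)=\Psi(u_0)+\int_0^\tau\Phi_{\epsilon^{-1}\Lambda s}P(\Phi_{-\epsilon^{-1}\Lambda s}w^\epsilon(s))\,ds,\qquad\tau\in[0,1].\]
As each $\Phi_\alpha$ is an isometry of every $h^q$ and $I(w^\epsilon)=I(v^\epsilon)$, (\ref{lemma2.3}) gives $|w^\epsilon(\tau)|_p\leqslant\sqrt{M_1}$, while Lemma \ref{lem-smooth} gives $|\dot w^\epsilon(\tau)|_{p-2}=|P(\Phi_{-\epsilon^{-1}\Lambda\tau}w^\epsilon(\tau))|_{p-2}\leqslant C_{M_1}$. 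Exactly as in the proof of Lemma 4.3, Arzel\`a-Ascoli makes $\{w^{\epsilon_n}\}$ pre-compact in $C([0,1],h^{p-2})$; passing to a further subsequence, $w^{\epsilon_n}\to v$ there. Then $v(0)=\Psi(u_0)$, $\sup_\tau|v(\tau)|_p\leqslant\sqrt{M_1}$, and, by continuity of $\pi_I$, $I(v(\tau))=\lim_n I(w^{\epsilon_n}(\tau))=\lim_n I^{\epsilon_n}(\tau)=I^0(\tau)$ uniformly on $[0,1]$.

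Next I would show $v(\tau)=\Psi(u_0)+\int_0^\tau R(v(s))\,ds$. This averaging step repeats verbatim the bookkeeping of Lemma 4.1 applied to $w^\epsilon$ in place of the action vector; it is in fact lighter, since in the interaction picture the field $\Phi_{\epsilon^{-1}\Lambda s}P(\Phi_{-\epsilon^{-1}\Lambda s}\,\cdot\,)$ is bounded on $h^p$-balls, so none of the $I_j^{-1/2}$-type singularities of (\ref{pipls}) occur and no $\gamma$-cut-offs are needed. One partitions $[0,\tau]$ into $\lfloor(\epsilon T_0)^{-1}\rfloor+1$ intervals $[a_i,a_{i+1}]$ of length $\epsilon T_0$ and on each (a) freezes $w^\epsilon$ at $a_i$; (b) truncates to the first $m_0$ modes via (\ref{lp_fk}); (c) rescales $s=a_i+\epsilon t$ and uses the $\Phi$-equivariance of $R$ (established in the proof of Proposition 3.4) together with Lemma 3.2 -- applicable since the non-resonance of $V$ makes $(\lambda_1,\dots,\lambda_{m_0})$ a non-resonant vector -- to replace $\frac{1}{T_0}\int_0^{T_0}\Phi_{\Lambda t}P(\Phi_{-\Lambda t}\,\cdot\,)\,dt$ by $R(\,\cdot\,)$, and then returns to $R(w^\epsilon(a_i))$. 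Summing the intervals (a Riemann-sum step as at the end of the proof of Lemma 4.1) gives $w^\epsilon(\tau)=\Psi(u_0)+\int_0^\tau R(w^\epsilon(s))\,ds+\Xi'_\epsilon(\tau)$ with $\sup_{[0,1]}|\Xi'_\epsilon(\tau)|_{p-2}\to0$ (first $\rho\downarrow0$, $T_0\uparrow\infty$, then $\epsilon\downarrow0$, as in Corollary 4.2). Sending $\epsilon=\epsilon_n\to0$, and using that $R\colon h^{p-2}\to h^{p-4}$ is continuous (Lemma \ref{lem-smooth}, since $p-2>p_d$) while $\{w^{\epsilon_n}\}$ is bounded in $h^p$, one passes to the limit and gets the integral identity for $v$; since $v(s)\in h^p$ it holds in $h^{p-2}$, and by Lemma 3.3 the curve $I(v(\cdot))=I^0(\cdot)$ solves (\ref{averaged1}).

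Finally one must upgrade $v$ to a genuine solution, i.e. check $v\in C([0,1],h^p)$; this is where Assumption B enters. The $C([0,\delta],h^p)$-mild solution of (\ref{effective1}) issuing from $v(0)=\Psi(u_0)$ supplied by Assumption B must coincide with $v$ on its interval of existence (the uniqueness part of Assumption B), so $v$ is $h^p$-continuous there; iterating this identification and using the uniform bound $|v(\tau)|_p^2=|I^0(\tau)|^{\sim}_p\leqslant M_1$ to exclude break-down -- a continuation argument based on Assumption B -- one reaches $\tau=1$. Then $v\in C([0,1],h^p)$ is the desired lift for $\theta=\varphi_0$, and $\Phi_{\theta-\varphi_0}v$ for arbitrary $\theta$. \textbf{The main obstacles} are: (i) the averaging estimate of the third paragraph, where infinitely many fast phases $\epsilon^{-1}\lambda_k\tau$ must be controlled simultaneously and the Weyl equidistribution pushed through uniformly in $\epsilon$, which is handled by truncation exactly as in Lemma 4.1; and (ii) the last step, where the merely $h^{p-2}$-continuous compactness limit has to be reconciled with the $C(h^p)$-well-posedness postulated in Assumption B.
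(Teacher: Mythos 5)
Your argument is correct in outline, but it follows a genuinely different route from the paper. The paper never returns to the perturbed dynamics: it takes the curve $I^0(\cdot)$ as given and performs a Galerkin-type lifting of the effective equation itself. For each $m$ it considers the finite-dimensional non-autonomous systems obtained by freezing the tail of the modes along the known data, namely $\dot I_k=\langle F_k\rangle\bigl(I_1,\dots,I_m,(I^0(\tau))^{>m}\bigr)$ and $\dot v_k=R_k\bigl(v_1,\dots,v_m,V^{>m}_{\theta}(I^0(\tau))\bigr)$, $k=1,\dots,m$; the smoothness of $\langle F_k\rangle$ in $(I_1,\dots,I_m)$ (Lemma 3.1, via Whitney) gives uniqueness for the first system, so the actions of the solution $v^m$ of the second must coincide exactly with $(I^0_1,\dots,I^0_m)$ for every $m$ (this also rules out blow-up of $v^m$); one then lets $m\to\infty$ by the same Arzel\`a--Ascoli argument as in Lemma 4.3. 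This cleanly separates ``averaging'' (done once, in Lemma 4.1, at the level of the actions) from ``lifting'', and needs no further equidistribution estimates. You instead go back to the perturbed flow, pass to the interaction representation $w^{\epsilon}=\Phi_{\epsilon^{-1}\Lambda\tau}v^{\epsilon}$, and prove a second, vector-field-level averaging statement ($w^{\epsilon_n}\to v$ solving $\dot v=R(v)$), from which the lemma follows by continuity of $\pi_I$ and the rotation invariance of Proposition 3.4. This is heavier (you must redo the bookkeeping of Lemma 4.1, albeit without the $I_j^{-1/2}$ singularities) but buys strictly more: convergence of the whole interaction representation, not just of the actions, and it would make Lemma 4.3's conclusion that $I^0$ solves (\ref{averaged1}) a corollary of Lemma 3.3. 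Two points of care in your version: in step (a)/(b) of the averaging paragraph you must truncate to the first $m_0$ modes \emph{before} freezing $w^{\epsilon}$ at $a_i$, since the freezing error is only controlled through $|\dot w^{\epsilon}|_{p-2}\leqslant C_{M_1}$ and the local Lipschitz property of $P$ costs two derivatives — after truncation this is harmless because only finitely many coordinates are involved; and your final identification of the $C(h^{p-2})$-limit with the $h^{p}$-solution furnished by Assumption B tacitly extends the uniqueness of Assumption B to curves that are merely $h^{p-2}$-continuous and $h^{p}$-bounded — the same looseness is present in the paper's own Lemma 4.5, so it is not a defect specific to your argument, but it deserves a sentence.
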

           \begin{proof}\footnote{This argument is a simplified version of the proof of Theorem 3.1 in \cite{Kuk10}}\quad For any $m\in\mathbb{N}$, consider the non-autonomous  finite dimensional systems 
            \begin{equation}\dot{I}_k=\langle F_k\rangle\Big(I_1,\cdots, I_m, \Big(I^0(\tau)\Big)^{>m}\Big),\quad k=1,\cdots, m,\label{(3.13)}
            \end{equation}
            \begin{equation}
            \dot{v}_k=R_k\Big(v_1,\dots,v_m,V_{\theta}^{>m}(I^0(\tau))\Big),\quad k=1,\dots,m.\label{(3.14)}
            \end{equation}
            Obviously, $(I^0_1(\tau),\dots,I^0_m(\tau))$, $\tau\in[0,1]$ solves system (\ref{(3.13)}). It is its unique solution  with initial data $(I^0_1(0),\dots, I^0_m(0))$, since by Lemma 3.1  $\langle F_k\rangle$ is smooth with respect to the variables  $(I_1,\dots,I_m)$.
            
            For $\bar{v}_0=(V_{\theta_1}(I^0_1(0)),\dots,V_{\theta_m}(I^0_m(0)))$, system (\ref{(3.14)})  has a unique solution $v^m(\tau)$,  defined for $\tau\in [0,T^{\prime})$, with $v^m(0)=\bar{v}_0$, where $T^{\prime}\leqslant1$ and $v^m(\tau)\xrightarrow{\tau\to T^{\prime}}\infty$ if $T^{\prime}<1$. Due to equality (\ref{averaged2}), $I(v^m)(\tau)$ solves system (\ref{(3.13)}) in time interval $[0,T^{\prime})$. Since $I(v^m(0))=(I^0_1(0),\cdots,I^0_m(0))$, therefore  $T^{\prime}=1$ and 
            \[I(v^m(\tau))\equiv(I^0_1(\tau),\dots, I^0_m(\tau))\quad \mbox{for}\quad 0\leqslant \tau\leqslant 1.\]
            Now denote 
            \[V_m(\tau)=(v^m(\tau),V_{\theta}^{>m}(\tau)),\quad \tau\in[0,1].\]
            For the same reason as in the proof of Lemma 4.3, the family $\{V_m(\tau),\tau\in[0,1]\}_{m\in \mathbb{N}}$ is pre-compact in $C([0,1],h^{p-2})$ and  \[V_m(0)=V_{\theta}(I^0(0)),\quad I(V_m(\tau))=I^0(\tau),\quad\tau\in [0,1],\quad m\in \mathbb{N}.\]
            So any limiting (as $m\rightarrow\infty$) curve  $v(\cdot)$ of the family$\{V_m(\tau),\tau\in[0,1]\}_{m\in \mathbb{N}}$  is a solution of the effective equation (\ref{effective1}), satisfying  equalities (\ref{3.12}).
           The lemma is proved.
            \end{proof}
            
            \begin{lem} (uniqueness) Under the same assumptions of Lemma 4.3, we have   $I^0(\cdot)\in C([0,1],h^p_I)$  and for every $q<p$,         \begin{equation} I^{\epsilon}(\cdot)\underset{\epsilon\to0}\longrightarrow I^0(\cdot)
            \quad\text{in }\quad C([0,1],h_I^q).
            \label{limq}
            \end{equation}
            \end{lem}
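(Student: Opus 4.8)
The plan is to show that the limiting curve $I^0(\cdot)$ is unique by identifying it with the action-vector of a solution of the effective equation, and then invoking Assumption B. First I would fix any limiting curve $I^0(\cdot)$ of the precompact family $\{I^\epsilon\}$ constructed in Lemma 4.3, and apply the Lifting Lemma 4.5 with, say, $\theta = \varphi(\Psi(u_0))$ (so that $V_\theta(I^0(0)) = \Psi(u_0) = v_0$). This produces a solution $v(\cdot) \in C([0,1], h^{p-2})$ of the effective equation (\ref{effective1}) with $v(0) = v_0$ and $I(v(\tau)) = I^0(\tau)$ for all $\tau \in [0,1]$. The key point is that, because $v_0 \in h^p$ and the effective vector field $R$ maps $h^p \to h^{p-2}$ boundedly and is locally Lipschitz there (it inherits these properties from $P$, hence from $\mathcal{P}$ via Lemmata \ref{lem-smooth} and 1.1, together with the averaging estimates of Lemma 3.1), a standard bootstrap/persistence-of-regularity argument upgrades $v(\cdot)$ to a curve in $C([0,T'], h^p)$ for some $T' > 0$; combined with the a-priori bound $|I(v(\tau))|^\sim_p = \tfrac12 |v(\tau)|_p^2 \leqslant M_1$ from Lemma 4.3, the $h^p$-solution in fact persists on all of $[0,1]$.

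Next I would invoke Assumption B: the effective equation has a \emph{unique} solution in $h^p$ with initial data $v_0$ on its maximal interval of existence, and the a-priori bound just established shows this interval contains $[0,1]$. Hence \emph{every} limiting curve $I^0(\cdot)$ equals $I(v(\cdot))$ for the \emph{same} solution $v(\cdot)$ of (\ref{effective1}) — in particular the limit is independent of the subsequence, so the whole family $I^\epsilon(\cdot)$ converges to $I^0(\cdot) = I(v(\cdot))$ in $C([0,1], h^{p-2}_I)$. Since $v(\cdot) \in C([0,1], h^p)$ and the map $\pi_I : h^p \to h^p_I$ is continuous, we get $I^0(\cdot) \in C([0,1], h^p_I)$, which is the first assertion.

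It remains to improve the convergence from $C([0,1],h^{p-2}_I)$ to $C([0,1],h^q_I)$ for every $q < p$. Here I would use interpolation together with the uniform bound $|I^\epsilon(\tau)|^\sim_p \leqslant M_1$: for any $q \in (p-2, p)$ and any truncation level $N$, split $|I^\epsilon(\tau) - I^0(\tau)|^\sim_q$ into the contribution of the first $N$ components, controlled by the already-established $h^{p-2}_I$-convergence and the bound $\lambda_N^{(q - (p-2))/2}$, and the tail $\sum_{k > N} \lambda_k^q (|I^\epsilon_k| + |I^0_k|)$, controlled by $\lambda_N^{-(p-q)} M_1$; choosing $N = N(\delta)$ first and then $\epsilon$ small makes both terms small uniformly in $\tau$. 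This handles all $q < p$ since $h^q_I \hookrightarrow h^{q'}_I$ for $q' \leqslant q$.

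The main obstacle I anticipate is the persistence-of-$h^p$-regularity step for the lifted solution $v(\cdot)$: a priori Lemma 4.5 only delivers $v(\cdot) \in C([0,1], h^{p-2})$, and one must argue that the mild solution of $\dot v = R(v)$ starting from $v_0 \in h^p$ does not lose derivatives, so that Assumption B (stated for $h^p$-solutions) actually applies to it. This requires the smoothing/mapping properties of $R$ on the scale $\{h^s\}$ and a Gronwall-type estimate on $|v(\tau)|_p$ using the a-priori bound $|I(v(\tau))|^\sim_p \leqslant M_1$ to rule out blow-up of the $h^p$-norm before $\tau = 1$; once that is in place, uniqueness from Assumption B closes the argument.
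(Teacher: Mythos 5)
Your proposal is correct and follows essentially the same route as the paper: lift the limiting curve via the Lifting Lemma, invoke the uniqueness in Assumption B together with the a-priori bound $|v(\tau)|_p^2=|I^0(\tau)|^{\sim}_p\leqslant M_1$ (inherited from Lemma 4.3) to force $T(|v_0|_p)>1$ and conclude that every limiting curve coincides with $I(v_E(\cdot))$, and then upgrade the convergence to $h^q_I$ for $q<p$ using the uniform $h^p_I$ bound --- the paper does this last step by a compactness/contradiction argument in place of your explicit low-mode/tail interpolation splitting, but the two are equivalent. The ``persistence-of-regularity'' obstacle you flag is genuine but is handled in the paper precisely by that same a-priori bound on $|v_I(\tau)|_p$, which rules out blow-up of the Assumption-B solution before $\tau=1$.
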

           \begin{proof}\quad Let $I^0(\cdot)$ and $J^0(\cdot)$ be two limiting  curves of the family  $\{I^{\epsilon}(\cdot)\}_{0<\epsilon<0}$, as $\epsilon\to0$, in $C([0,1],h_I^{p-2})$. Then by Lemma 4.4, for any $\theta\in \mathbb{T}^{\infty}$, there are solutions $v_I(\cdot)$, $v_J(\cdot)$ of the effective equation (\ref{effective1}) such that for $0\leqslant \tau\leqslant1$,
\[I(v_I(\tau))=I^0(\tau),\quad I(v_J(\tau))=J(\tau),\quad v_I(0)=v_J(0)=v_0=V_{\theta}(I_0).\]
Due to assumption B,  for initial data $v_0$  the effective equation (\ref{effective1}) has a unique solution $v_E(\cdot)\in C\big(\big[0,T(|v_0|_p)\big), h^p\big)$. Therefore 
\begin{equation}v_I(\tau)=v_J(\tau)=v_E(\tau).
\label{t<1}
\end{equation}
This relation holds for $\tau\leqslant1$ if $T(|v_0|_p)>1$ and for $\tau<T(|v_0|_p)$ if $T(|v_0|_p)<1$. But if $T(|v_0|_p)<1$, then $|v_E(\tau)|_p\rightarrow \infty$ as $\tau\to T(|v_0|_p)$. By the construction in Lemmata 4.3 and 4.4, we know $|v_I(\tau)|_p^2\leqslant M_1$ for $\tau\in[0,1]$. Together with (\ref{t<1})  we have that  $T(|v_0|_p)> 1.$
Hence $I^0=J^0$,  $I^0\in C([0,1],h^p_I)$ and 
\begin{equation}
I^{\epsilon}(\cdot)\underset{\epsilon\to0}\rightarrow I^0(\cdot) \quad \text{in}\quad C([0,1],h^{p-2}_I).
\label{limp-2}
\end{equation}

For any $q<p$, assume that the convergence (\ref{limq}) do not holds. Then there exists $\delta>0$ and sequences $\epsilon_{n}, \tau_n\in[0,1]$ such that 
\begin{equation}
\epsilon_n\to0\quad \text{as} \quad n\to\infty\quad\text{and}\quad|I^{\epsilon_n}(\tau_n)-I^0(\tau_n)|^{\sim}_q\geqslant \delta.
\label{lim-not}
\end{equation}
Takes subsequence $\{n_k\}$ such that $\tau_{n_k}\to\tau_0$ as $n_k\to\infty$. Since  the sequence $\{I^{\epsilon_{n_k}}(\tau_{n_k})\}$ is pre-compact in $h^q_I$, and by (\ref{limp-2}), its limiting point as $n_k\to\infty$ equals $I^0(\tau_0)$, so we have $I^{\epsilon_{n_k}}(\tau_{n_k})$ converges to $I^0(\tau_0)$ in $h^q_I$ as $n_k$ goes to $\infty$. This contradicts with (\ref{lim-not}). 
So we  completes the proof of Lemma 4.5 and also  the proof of Theorem~0.1.
\end{proof}
\section{application to  complex Ginzburg-Landau equations}
In this section we prove that assumptions A and B hold for equation (\ref{cgl1}), satisfying (\ref{cgl1c-1}) and (\ref{cgl1c-2}).  

\subsection{Verification of Assumption A} In this subsection, we denote by $|\cdot|_s$ the \mbox{$L^s$-norm}.  Let $u(\tau)$ be a solution of equation (\ref{cgl1}) such that $u(0,x)=u_0$. Then
\[
\begin{split}
\frac{d}{d\tau}||u(\tau)||_0^2&=2\langle u, \dot{u}\rangle=2\langle u,- 
\epsilon^{-1}iA_Vu+\triangle u-\gamma_R|u|^{2p}u-i\gamma_I|u|^{2q}u\rangle,\\
&=-2||u||^2_1+2||u||_0^2-2\gamma_R|u|_{2p+2}^{2p+2}.
\end{split}
\]
  Since $||u||_0^2\leqslant |u|_{2p+2}^2$,  then relation   $||u(\tau_1)||_0>\gamma_R^{-1/2p}=B_2$ implies that  \[\frac{d}{d\tau}||u(\tau_1)||_0^2<0.\]
So  for any $T>0$ we have
\begin{equation}
||u(T)||_0\leqslant \min\{B_2,\;e^T||u_0||_0\}.\label{norm0}
\end{equation}
Now we rewrite equation (\ref{cgl1}) as follows:
\begin{equation}
\dot{u}+\epsilon^{-1}i (\triangle u +V(x)u +\epsilon\gamma_I|u|^{2q}u)=\triangle u-\gamma_R|u|^{2p}u.
\label{cgl-hf}
\end{equation}
For any $k\in\mathbb{N}$, denote
\[||u||_k^{\backsim2}=\langle A^k_V u,u\rangle,\quad A_V=-\triangle +V(x).\]
The l.h.s is a hamiltonian system with the hamiltonian function $\epsilon^{-1}H(u)$,
\[H(u)=\frac{1}{2}\langle A_V u,u\rangle +\frac{\epsilon}{2q+2}|u|_{2q+2}^{2q+2}.\]
We have $d H(u)(v)=\langle A_V u, v\rangle +\epsilon\gamma_I\langle |u|^{2q}u,v\rangle$, and if $v$ is the vector field in the l.h.s of (\ref{cgl-hf}), then $dH(u)(v)=0$. 
So we have
\[
\begin{split}
\frac{d}{d\tau}H(u(\tau))&=-\gamma_R\langle A_Vu,|u|^{2p}u\rangle+\langle A_Vu,\triangle u\rangle\\
&-\epsilon\gamma_I\gamma_R|u|_{2p+2q+2}^{2p+2q+2}+\epsilon \gamma_I\langle |u|^{2q}u,\triangle u\rangle,
\end{split}
\]
Denoting $U_q(x)=\frac{1}{q+1}u^{q+1}$ and $U_p=\frac{1}{p+1}u^{p+1}$, we get
\[\langle|u|^{2q} u,\triangle u\rangle\leqslant -\int_{\mathbb{T}^n}|\nabla u|^2|u|^{2q}dx=-||\nabla U_q||^2_0,\]
and a similar relation holds for $q$ replaced by $p$. Therefore
\[
\begin{split}
\frac{d}{d\tau}H(u(\tau))&\leqslant -\frac{1}{2}||u||_2^2-\gamma_R||\nabla U_p||_0^2-\epsilon \gamma_I||\nabla U_q||_0^2-\epsilon \gamma_I\gamma_R|u|_{2p+2q+2}^{2p+2q+2}\\
&-\int_{\mathbb{T}^d}V(x)|\nabla u|^2dx+C_1||u||_0^2,
\end{split}
\]
where $C_1$ depends only on $|V|_{C^2}$. By this relation and  (\ref{norm0}), we have 
\begin{equation}
H(u(T))\leqslant H(u(0))+C_1TB_2^2,\quad\text{for any} \quad T>0.
\end{equation}
So
\begin{equation}
||u(T)||_1^{\backsim 2}\leqslant 2H(u(0))+2C_1TB_2^2,\quad \text{for any }\quad T>0.
\end{equation}
Simple calculation shows that
\[A_V^2u=(-\triangle)^2u-2V\triangle u-\nabla V\cdot\nabla u+(V^2-\triangle V)u.\]
 We consider 
\begin{equation}
\frac{d}{d\tau}\langle A_V^2 u,u\rangle=2\langle A_V^2 u, \triangle u-\gamma_R |u|^{2p}u-i\gamma_I|u|^{2q}u. \rangle\label{5.11}
\end{equation}
By the interpolation and Young inequality,  we have 
\begin{equation}
\begin{split}
\langle A_V^2u,\triangle u\rangle
&\leqslant -||u||_3^2+C_1(|V|)||u||_2^2+C_2(|V|_{C^1})||u||_1^2+C_3(|V|_{C^2})||u||_0^2\\
&\leqslant -||u||_3^2+C_1||u||_3^{\frac{4}{3}}||u||^2_0+C_2||u||_3^{\frac{2}{3}}||u||^2_0+C_3||u||_0^2\\
&\leqslant -\frac{3}{4}||u||_3^2+C(|V|_{C^2},||u||_0). 
\end{split}
\label{5.33}
\end{equation}
We deduce from integration by part and H\"older inequality that
\begin{equation}
-\langle (-\triangle)^2u, |u|^{2p}u\rangle\leqslant ||u||_3|(|u|^{2p}\nabla u)|_2\leqslant ||u||_3 |u|_{2pq_1}^{2q}|\nabla u|_{p_1},\label{5.13}
\end{equation}
where $p_1,q_1<\infty$ satisfy $1/p_1+1/q_1=1/2$.   Let $p_1$ and $q_1$ have the form
\[p_1=\frac{2d}{d-2s},\quad q_1=\frac{d}{s}.\]
We specify parameter $s$:
For $d\geqslant 3$,   choose $s=p(d-2)<\min\{d/2,2\}$;
for $d=1,2$, choose $s\in (0,\frac{1}{2})$.  Due to condition (\ref{cgl1c-2}), we have 
the Sobolev embeddings 
\[H^s(\mathbb{T}^d)\to L^{p_1}(\mathbb{T}^d) \quad\text{and} \quad H^1(\mathbb{T}^d)\to L^{2pq_1}(\mathbb{T}^d),\]
 implying that 
\[|\nabla u|_{p_1}\leqslant ||u||_{1+s},\quad |u|_{2pq_1}^{2p}\leqslant ||u||_1^{2p}.\]
Applying again the interpolation and Young inequality we find that for any $\delta>0$,

\begin{equation}
\begin{split}
-\langle \triangle^2 u , |u|^{2p}u \rangle&\leqslant ||u||_{3}||u||_{1+s}||u||_1^{2p}\\
&\leqslant C||u||_{3}^{1+\frac{1+s}{3}}||u||_0^{\frac{2-s}{3}}||u||_1^{2p}\\
&\leqslant \delta ||u||_{3}^2+C(\delta)(||u||_0^{\frac{2-s}{3}}||u||_1^{2p})^{\frac{2-s}{6}},
\end{split}
\end{equation}
We can deal with other terms in (\ref{5.11}) and (\ref{5.13}) similarly. With suitable choice of $\delta$, from the inequality above together with (\ref{5.33}), we can get that for any $T>0$
\begin{equation}
||u(T)||_2^{\backsim 2}+\int_0^T||u||_{3}^2d\tau\leqslant ||u(0)||_2^{\backsim2}+C(2,|V|_{C^4},T,B_2),
\end{equation}

By similar  argument, for any $m\geqslant 3$ and $T>0$ we can obtain
\[||u(T)||_m^{\backsim 2}+\int_0^T||u||_{m+1}^2d\tau\leqslant ||u(0)||_m^{\backsim2}+C(m,|V|_{C^{4m}},T,B_2),
\]
Then
\[||u(T)||_{m}\leqslant C(||u(0)||_m,|V|_{C^{4m}},m,T,B_2),\quad\text{for any} \;T>0.\]
This finishes the verification of assumption A.
\subsection{Verification of Assumption B} We follow \cite{Kuk11}. 
In equation (\ref{cgl1}) with $u\in H^2$, we pass to the $v$-variable, $v=\Psi(u)\in h^2$:
\begin{equation}
\dot{v}_k+i\epsilon^{-1}\lambda_k=P_k(v),\quad k\geqslant 1.
\end{equation}
Here
\[P_k=P_k^1+P_k^2+P_k^3,\]
where $P^1$, $P^2$ and $P^3$ are, correspondingly,  the linear, nonlinear dissipative and nonlinear  hamiltonian parts of the perturbation:
\[P^1(v)=\Psi(\triangle u),\quad P^2(v)=-\gamma_R\Psi(|u|^{2p}u),\quad P^3(v)=-i\gamma_I\Psi(|u|^{2q}u),\]
with $u=\Psi^{-1}(v)$. Following the procedure in Section 3, the effective equations for (\ref{cgl1}) has the form:
\begin{equation}
\dot{v}=\sum_{i=1}^3R^i(v),
\end{equation}
where 
\[ R^i(v)=\int_{\mathbb{T}^{\infty}}\Phi_{-\theta}P^i(\Phi_{\theta})\Big)d\theta,\quad i=1, 2, 3.\]
Consider the operator
\[\mathcal{L}:=\Psi\circ(-\triangle)\circ\Psi^{-1}=\Psi\circ(A_V-V)\circ\Psi^{-1}:=\hat{A}-\Psi\circ V\circ\Psi^{-1}:=
\hat{A}-\mathcal{L}^0.\]
Clearly, $\hat{A}$ is the diagonal operator $\hat{A}=\text{diag}\{\lambda_j\left(\begin{array}{cc}1 & 0 \\0 & 1\end{array}\right),\;j\geqslant 1\}$.  By Lemma~1.1, $\mathcal{L}^0=\Psi\circ V\circ \Psi^{-1} $ defines bounded maps
\[\mathcal{L}^0:\quad h^m\to h^m,\quad \forall m\leqslant n,\]
and in the space $h^0$ the operator $\mathcal{L}^0$ is self-adjoint.
Since $\hat{A}$ commutes with the rotation $\Phi_{\theta}$, then 
\begin{equation}
\begin{split}
R^1&=-\int_{\mathbb{T}^{\infty}}\Phi_{-\theta} \hat{A}\Phi_{\theta}vd\theta+\int_{\mathbb{T}^{\infty}}\Phi_{-\theta}\mathcal{L}^0(\Phi_{\theta}v)d\theta\\
&=-\hat{A}v+R^0(v),\quad R^0(v)=\int_{\mathbb{T}^{\infty}}\Phi_{-\theta}\mathcal{L}^0(\Phi_{\theta}v)d\theta.
\end{split}
\end{equation}
Since for $v=(v_1,v_2,\dots)$, we have
\[\mathcal{L}^0(v)_j=\sum_{i=0}^{+\infty}\langle V(x)v_i\varphi_i(x),\varphi_j(x)\rangle,\quad j\geqslant1,\]
then,
\[
R^0_k(v)=\sum_{j=1}^{+\infty}\int_{\mathbb{T}^{\infty}} \langle  V(x)v_je^{i\theta_j}\varphi_j(x), e^{i\theta_k} \varphi_k(x)\rangle d\theta=v_k\langle V\varphi_k, \varphi_k\rangle.
\]
That is, 
\begin{equation}
R^1=\text{diag}\;\{-\lambda_k+M_k, k\geqslant 1\},\quad M_k=\langle V\varphi_k,\varphi_k\rangle.
\end{equation}
The term $R^2(v)$ is defined as an integral with the integrand
\[\Phi_{-\theta}P^2\Phi_{\theta}(v)=-\gamma_{R}\Phi_{-\theta}\Psi(f_p(|u|^2)u)|_{u=\Psi^{-1}\Phi_{\theta}v}:=F_{\theta}(v).
\]
Define $\mathcal{H}(u)=\int \mathcal{F}(|u|^2)dx$, where $\mathcal{F}^{\prime}=\frac{1}{2}f_p$. Then $\nabla \mathcal{H}(u)=f_p(|u|^2)u$. Denoting $\Psi^{-1}\Phi_{\theta}=L_{\theta}$, we have
\[F_{\theta}(v)=-\gamma_{R}L^{*}_{\theta}\nabla \mathcal{H}(u)|_{u=L_{\theta}(v)}=-\gamma_{R}\nabla(\mathcal{H}\circ L_{\theta}(v)).
\]
So
 \[R^2(v)=-\gamma_{R}\nabla_{v}\Big(\int_{\mathbb{T}^{\infty}}(\mathcal{H}\circ \Psi^{-1})(\Psi_{\theta}v)d\theta\Big)=-\gamma_{R}\nabla_v\langle \mathcal{H}\circ\Psi^{-1}\rangle.
 \]
Similarly, we have $R^3=-i\gamma_I\nabla_v\langle \mathcal{G}\circ\Psi^{-1}\rangle$ with $\nabla \mathcal{G}(u)=f_q(|u|^2)u$.  Since $\langle \mathcal{G}\circ\Psi^{-1}\rangle$ is a function only of the action $(I_1,\dots)$, we have that $\nabla_{v_k}\langle \mathcal{G}\circ \Psi^{-1}\rangle$ is proportional to $v_k$. Then $v_k\cdot R^3_k(v)=0$.  That is, it contributes a zero term in the averaged equation. 
Hence we could set  the effective equation to be\[\dot{v}=R^1(v)+R^2(v).\]
It is a quasi-linear heat equation,  written in Fourier coefficients, which is known to be locally well posed. This verifies assumption B.
\section*{Acknowledgments}
Firstly, the author want to thank his PhD supervisor Sergei Kuksin for formulation of the problem and guidance. He is also grateful to professor Dario Bambusi for useful suggestions and pointing out a flaw in the original manuscript. Finally, he  would like to thank all of the staff and faculty at   CMLS of \'Ecole Polytechnique for their support. 

\section*{Appendix}
Consider the $l_2$-space of sequences $x=(x_1,x_2,\dots)$. 
The following lemma is a slight modification of the well known  theorem of Whitney  \cite{HW42}.

\noindent {\bf Lemma A.} For any $n\in\mathbb{N}$,  let $f\in C^{\infty}(l_2)$ be  even in $n$ variables, i.e.
\[f(x_1,\dots,x_i,\dots)=f(x_1,\dots,-x_i,\dots),\quad i=1,2,\dots, n.\]
Then   there exists $g_n\in C^{\infty}(l_2)$ such that \[g_n(x_1^2,\dots,x_n^2,x_{n+1},\dots)=f(x_1,x_2,\dots).\]

\begin{proof}\quad For $n=1$,  we define $g_1(x_1,x_2,\dots)=f(x_1^{\frac{1}{2}},x_2,\dots)$.  Since $f$ is even with respect to $x_1$, for any $s\in\mathbb{N}$, we have 
\[f(x_1,x_2,\dots)=f(\hat{x})+f_1(\hat{x})x_1^2+\cdots+f_{s-1}(\hat{x})x_1^{2s-2}+\phi(x)x_1^{2s},\]
where $\hat{x}=(0,x_2,\dots)$, $f_i=[(2i)!]^{-1}\partial_{x_1}^{2i}f(\hat{x})$ and $\phi(x)$ is smooth when $x_1\neq0$, even with respect to $x_1$, and satisfies
\[\lim_{x_1\to 0} x_1^k\partial_{x_1}^k\phi(x)=0,\quad k=1,\dots,2s.\eqno{(A.1)}\]
Set $\psi(x)=\phi(x_1^{\frac{1}{2}},x_2,\dots)$, then 
\[g_1(x)=f(\hat{x})+f_1(\hat{x})x_1+\cdots+f_{s-1}(\hat{x})x_1^{s-1}+\psi(x)x_1^s.\]
 We wish  to check that  $g_1(x)$ is $C^s$-smooth with respect to $x_1$. It is sufficient to prove that the limits 
$\lim_{x_1\to0}x_1^k\partial_{x_1}^k\psi(x)$,  $k=1,\dots,s$, exist and are finite. Differentiating  $\psi(x_1^2,x_2,\dots)=\phi(x)$ with respect to $x_1$, we get that  there are some constants $a_{ki}$ such that 
\[\partial_{x_1}^k\phi(x)=2^k x_1^k\partial_{x_1}^k\psi(x_1^2,x_2,\dots)+\sum_{1\leqslant i\leqslant k/2}a_{ki}x_1^{k-2i}\partial_{x_1}^{k-i}\psi(x_1^2,x_2,\dots),\quad k=1,\dots, s.\]
Solving these equation successively for $x^{2k}_1\partial_{x_1}^k\psi$,  $k=1,\dots,s$, we obtain that  there are some constant $\beta_{ki}$ such that 
\[x_1^{2k}\partial_{x_1}^k\psi(x_1^2,x_2,\dots)=\sum_{0\leqslant i\leqslant k}\beta_{ki}x_1^{k-i}\partial_{x_1}^{k-i}\phi(x).\]
 
By ($A.1$), we know the $\lim_{x_1\to0}x_1^k\partial_{x_1}^k\psi(x)$, $k=1,\dots,s$, exist and are finite. So $g_1(x)$ is $C^s$ -smooth. Since $s$ is arbitrary and $g_1(x)$ defined in a unique way, we have $g_1\in C^{\infty}(l^2)$ and $g_1(x_1^2,x_2,\dots)=f(x_1,x_2,\dots)$. This prove the statement of the lemma for $n=1$. 

 For $n\geqslant 2$,  the assertion of the lemma can be prove by  induction.  Assume we have proved the lemma for $m=n-1$. Then there exists $g_{n-1}\in C^{\infty}(l_2)$ such that
$g_{n-1}(x_1^2,\dots,x_{n-1}^2,x_n)=f(x_1,x_2,\dots)$ and $g_{n-1}$ is even in variable $x_n$. Applying what we have proved for $m=1$ to  $g_{n-1}$ with respect to $x_n$, we  get the assertion for $m=n$.
\end{proof}

 \bibliography{CGL.bib}

\end{document}